\def\ps@pprintTitle{%
 \let\@oddhead\@empty
 \let\@evenhead\@empty
 \def\@oddfoot{}%
 \let\@evenfoot\@oddfoot}
\newtheorem{theorem}{Theorem}
\newtheorem{lemma}[theorem]{Lemma}
\newtheorem{proposition}[theorem]{Proposition}
\newtheorem{corollary}[theorem]{Corollary}
\newdefinition{definition}[theorem]{Definition}
\newdefinition{conjecture}[theorem]{Conjecture}
\newdefinition{example}[theorem]{Example}
\newdefinition{remark}[theorem]{Remark}
\newdefinition{note}[theorem]{Note}
\newdefinition{case}[theorem]{Case}
\newproof{proof}{proof}
\pgfplotsset{compat=newest}
\newcommand{\envelope}{(\kern1pt\Letter\kern1pt)}
\newcommand{\N}{\ensuremath{\mathbb{N}}}
\newcommand{\R}{\ensuremath{\mathbb{R}}}
\newcommand{\C}{\ensuremath{\mathbb{C}}}
\newcommand{\e}{\ensuremath{\varepsilon}}
\newcommand{\p}{\ensuremath{\varphi}}
\newcommand{\dt}[2]{\mbox{#1.\hspace{.13em}#2.}}
\newcommand{\dtt}[3]{\mbox{#1.\hspace{.13em}#2.\hspace{.13em}#3.}}
\newcommand{\dtf}[4]{\mbox{#1.\hspace{.13em}#2.\hspace{.13em}#3.\hspace{.13em}#4.}}
\newcommand*\cvec[1]{\begin{pmatrix}#1\end{pmatrix}}
\newcommand{\sgn}{\mathrm{sgn}}
\newcommand{\Langle}{\left\langle}
\newcommand{\Rangle}{\right\rangle}
\newcommand{\T}{\mathcal{T}}
\newcommand{\Id}{\mathrm{Id}}
\newcommand{\lcm}{\mathrm{lcm}}
\newcommand{\1}{\mathbbm{1}}
\renewcommand{\S}{\mathcal{S}}
\renewcommand{\O}{\mathcal{O}}
\begin{document}

\begin{frontmatter}

\title{A space-based method for the generation of a Schwartz function with infinitely many generalized vanishing moments with applications in image processing}

\author[rvt]{T. Fink \corref{cor1}}
\ead{thomas.fink@uni-passau.de}

\author[rvt]{U. K\"ahler}
\ead{ukaehler@ua.pt}

\cortext[cor1]{Corresponding author}

\address{Universit\"at Passau, Universidade de Aveiro}

\begin{abstract}
In this article we construct a function with infinitely many vanishing (generalized) moments. This is motivated by an application to the Taylorlet transform which is based
on the continuous shearlet transform. It can detect curvature and other higher order geometric information of singularities in addition to their position and the direction. For a robust detection of these features a function with higher order vanishing moments, $\int_\R g(x^k)x^mdx=0$, is needed. We show that the presented construction produces an explicit formula of a function with $\infty$ many vanishing moments of arbitrary order and thus allows for a robust detection of certain geometric features. The construction has an inherent connection to q-calculus, the Euler function and the partition function.
\end{abstract}

\end{frontmatter}

\section{Introduction}

Vanishing moment conditions give orthogonality with respect to subspaces of polynomials and therefore play a vital role in many areas of analysis. Especially in wavelet and shearlet theory they are of pivotal importance.  A wavelet needs vanishing moments to enable a detection of the regularity of the analyzed signal \cite[Thm 2]{MaHw92}. Similarly, for the resolution of the wavefront set by the continuous shearlet transform, a shearlet has to incorporate so called vanishing directional moments \cite[Thm 6.1 $\&$ 6.4]{gr11}. For a shearlet $\psi\in L^2(\R^2)$, they are of the form $\int_\R\psi(x_1,x_2)x_1^m dx_1=0$ for all $x_2\in\R$.

While the continuous shearlet transform allows for a detection of the position and direction of a singularity, the recently created Taylorlet transform additionally allows for a detection of the curvature and other higher order geometric information of singularities. The Taylorlets inherit the properties of the shearlets and extend them by utilizing shears of higher order, \dt ie
\[S_s(x):=\begin{pmatrix}
x_1+\sum_{k=0}^n \frac{s_k}{k!}\cdot x_2^k \\
x_2
\end{pmatrix}\quad\text{for } x\in\R^2\text{ and }s=(s_0,\ldots,s_n)\in\R^{n+1}.\] 
The Taylorlet transform of a function $f\in L^2(\R^2)$ is defined as $L^2$-scalar product of $f$ and a dilated, translated and sheared version of a Taylorlet. This transform allows for a detection of certain geometric features of the singularities of the analyzed function by observing the transform's decay rate for decreasing scales. The decay rate depends on the choice of the translation and shear parameters and on the so called vanishing directional moments of higher order of a Taylorlet $\tau\in\S(\R^2)$, \dt ie conditions of the form $\int_\R\tau(\pm x_1^k,x_2)x_1^mdx_1=0$ for all $x_2\in\R$, where $k,m\in\N$ \cite{F17}. Thus, a function with infinitely many vanishing moments of higher order is both of great theoretical interest and very useful for a robust detection of geometric information.

For the construction of functions with vanishing moments there exist two classical approaches. The first method uses an arbitrary Schwartz function $f\in\S(\R)$, whose $n^{\text{th}}$ derivative has $n$ vanishing moments. The most famous example is probably the Mexican hat wavelet which is the second derivative of the Gaussian and thus exhibits 2 vanishing moments. The drawback of this approach is its limited use, if one is interested in a function with infinitely many vanishing moments. To this end, a Fourier ansatz is more convenient. Since the number of vanishing moments of a function coincides with the order of the Fourier transform's root in the origin, it suffices to construct a function in the Fourier domain which vanishes with a proper order in the origin. A well known example of this method is the Meyer wavelet, whose Fourier support is $\left[-\frac{8\pi}3,-\frac{2\pi}3\right]\cup\left[\frac{2\pi}3,\frac{8\pi}3\right]$. Hence, the Meyer wavelet exhibits infinitely many vanishing moments.

Yet, under certain circumstances an explicit formula for such a function in space domain is preferable over a Fourier construction. For instance, the higher order vanishing moment conditions $\int_\R g(x^k)x^mdx=0$ do not interact well with the Fourier transform due to their non-linear nature and hence the construction and explicit expression of a function incorporating these conditions is easier to achieve and to apply in space domain. Hence, we consider a construction yielding an explicit formula in space domain for a function with infinitely many vanishing moments.

The generation of vanishing moments is achieved by considering linear combinations of dilations of a function. This process yields a structure which can be studied by applying a q-calculus of operator-valued functions. This calculus is a variation of the classical analysis and resembles the finite difference calculus, but uses a multiplicative notation instead. \dt Eg, for $q>0$, the q-derivative of a function $f\in C(\R)$ in $x\in\R$ is defined as
\[d_q f(x) = \frac{f(qx)-f(x)}{qx-x}.\]
This calculus recently gained interest due to its applications in quantum mechanics. The construction we present in this article has an inherent connection to the q-Pochhammer symbol and the Euler function. The latter itself incorporates a deep link to the theory of partitions.

This article is structured as follows. In the second section, we define the Taylorlet transform and show its most important properties. Section 3 contains a small introduction into q-calculus. The fourth section is dedicated to the construction of Schwartz functions with infinitely many vanishing moments and highlights its connection to q-calculus. In section 5, we give a numerical analysis of the evaluation of the function constructed in the previous section and show numerical examples of its application to the Taylorlet transform. Finally, the last section incorporates a conclusion of the article and an outline of a possible generalization of the wavefront set.

\section{Taylorlets and higher order vanishing moments}

A classical result in the theory of shearlets which proves its value is the resolution of the wavefront set by the continuous shearlet transform which was shown by Kutyniok and Labate \cite{KuLa09}. Suppose, the function $f\in\S'\left(\R^2\right)$ analyzed with the continuous shearlet transform has a singular support that can be represented as graph of a function $q\in C^\infty(\R)$, \dt eg
\[f(x)=\1_{\R_+}\left(x_1-q(x_2)\right)\quad\text{for all }x\in\R^2.\]
We will call $q$ the singularity function. Under these circumstances, the wavefront set of $f$ can be interpreted as a linear approximation of $q$. In this scenario, the shearlet transform $\mathcal{SH}_\psi f(a,s,t)$ does not decay faster than any polynomial for $a\to 0$, if and only if $q(t_2)=t_1$ and $q'(t_2)=s$. In other words, the continuous shearlet transform provides the first two Taylor coefficients of $q$ at the point $t_2$. The Taylorlet transform expands this idea by supplying means to detect arbitrary Taylor coefficients of the singularity function $q$.

\begin{definition}[Vanishing moments of higher order, analyzing Taylorlet, restrictiveness]
We say that a function $f:\R\to\R$ has $M$ vanishing moments of order $n$ if
\[\int_\R f\big(\pm t^k\big)t^m dt = 0\]
for all $m\in\{0, \ldots, kM-1\}$ and for all $k\in\{1,\ldots,n\}.$

Let $\S(\R^{d})$ denote the Schwartz space on $\R^{d}$ and let $g,h\in\S(\R)$ such that $g$ has $M$ vanishing moments of order $n$. The space of Schwartz functions with infinitely many vanishing moments of order $n$ will be denoted as $\S^*_n(\R).$ We call the function
\[\tau=g\otimes h\]
an analyzing Taylorlet of order $n$ with $M$ vanishing moments in $x_1$-direction.

We say $\tau$ is restrictive, if additionally 
\begin{enumerate}[label=(\roman*)]
\item $g(0)\ne 0$ and $\int_0^\infty g(t)t^jdt\ne 0$ for all $j\in\{0,\ldots,M-1\}$ and
\item $\int_\R h(t)dt\ne 0$.
\end{enumerate}
\end{definition}

\begin{figure}[H]
\centering
\begin{tikzpicture}[scale=.6]
\begin{axis}[samples=500,domain=-6:6]
\addplot[ultra thick,blue!80!black]plot (\x, {1/630*exp(-pow(\x,2))*(1+\x)*(315 - 51660*pow(\x,2) + 286020*pow(\x,4) - 349440*pow(\x,6) + 142464*pow(\x,8) - 21504*pow(\x,10) + 1024*pow(\x,12))});
\end{axis}
\end{tikzpicture}
\caption{Function with 3 vanishing moments of order 2}
\end{figure}
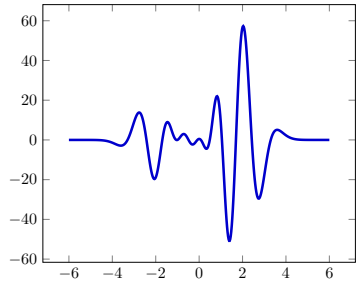

The Taylorlet transform is defined as follows.

\begin{definition}[Taylorlet transform]
Let $n\in\N$ and let $\tau\in\S(\R^2)$ be an analyzing Taylorlet of order $n$. Let $\alpha>0$, $t\in\R$, $a>0$ and $s\in\R^{n+1}$. We define
\[\tau^{(n,\alpha)}_{a,s,t}(x):= \tau\cvec{\left[x_1-\sum_{k=0}^n \frac {s_k}{k!}\cdot (x_2-t)^k\right]/a \\ (x_2-t)/a^\alpha}\quad\text{for all }x=(x_1,x_{2})\in\R^{2}.\]
The Taylorlet transform \dtt wrt $\tau$ of a tempered distribution $f\in \S'(\R^2)$ is defined as
\begin{align*}
\mathcal{T}^{(n,\alpha)}:\S'\left(\R^2\right)\to C^\infty\left(\R_+\times\R^{n+1}\times\R\right),\quad \mathcal{T}^{(n,\alpha)}f(a,s,t) = \Langle f, \tau^{(n,\alpha)}_{a,s,t}\Rangle.
\end{align*}
\end{definition}

\begin{table}[H]
\centering
\begin{tabular}{|c|c|c|}
\hline
Analyzing function & Moment condition & Detected geometric features \\ \hline
Shearlet & $\int_\R \psi(x_1,x_2)x_1^mdx_1=0$ & Position and direction \\ 
$\psi\in\S\left(\R^2\right)$ & for all $m\in\N$ & of singularities \\
\hline
Taylorlet & $\int_\R g(t)t^mdt=0=\int_\R g(\pm t^2)t^m dt$ & Position, direction and \\ 
$\tau=g\otimes h\in\S\left(\R^2\right)$ & for all $m\in\N$ & curvature of singularities \\
\hline
Taylorlet & $\int_\R g(\pm t^k)t^mdt=0$ & First n+1 Taylor cofficients \\ 
$\tau=g\otimes h\in\S\left(\R^2\right)$ & for all $k\in\{1,\ldots,n\}$, $m\in\N$ &  of the singularity function \\
\hline
\end{tabular}
\caption{Moment conditions and detection results}
\end{table}
\vskip5mm

The Taylorlet transform allows for the following detection result.
\vskip5mm

\begin{theorem} \label{detect}
Let $M,n\in\N$ and let $\tau$ be an analyzing Taylorlet of order $n$ with $M$ vanishing moments in $x_1$-direction. Let furthermore $0\le j< M-1$, $t\in\R$ and let $q\in C^\infty(\R)$ be the singularity function of
\[f(x)=\left[x_1-q(x_2)\right]^{j}\cdot \1_{\R_\pm}\left(x_1-q(x_2)\right).\]
\begin{enumerate}

\item Let $\alpha>0$. If $s_0\ne q(t)$, the Taylorlet transform has a decay of
\[\mathcal{T}^{(n,\alpha)}f(a,s,t)=\O\big(a^N\big)\quad\text{for }a\to 0\]
for all $N>0$.

\item Let $\alpha<\frac 1n$, $k<n$ and let $s_\ell=q^{(\ell)}(t)$ for all $\ell\in\{0,\ldots,k-1\}$. Then the Taylorlet transform has the decay property
\[\mathcal{T}^{(n,\alpha)}f(a,s,t)=\O\left(a^{j-1+(M-j-1)[1-(k+1)\alpha]}\right)\quad\text{for }a\to 0.\]

\item Let $\alpha>\frac 1{n+1}$ and let $\tau$ be restrictive. If $s_\ell=q^{(\ell)}(t)$ for all $\ell\in\{0,\ldots,n\}$, then the Taylorlet transform has the decay property
\[\mathcal{T}^{(n,\alpha)}f(a,s,t)\sim a^{j}\quad\text{for }a\to 0.\]
\end{enumerate}

\end{theorem}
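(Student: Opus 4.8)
The plan is to carry out the inner ($x_1$-)integration in the pairing $\langle f,\tau^{(n,\alpha)}_{a,s,t}\rangle$ explicitly and reduce everything to a one–dimensional integral. Since $\tau=g\otimes h$, write $P(x_2)=\sum_{\ell=0}^{n}\tfrac{s_\ell}{\ell!}(x_2-t)^{\ell}$ for the shearing polynomial and $R(x_2)=q(x_2)-P(x_2)$ for the residual. The substitution $x_1=au+P(x_2)$ turns the $x_1$–integral of $[x_1-q(x_2)]^{j}\,\1_{\R_+}(x_1-q(x_2))\,g\big((x_1-P(x_2))/a\big)$ into $a^{j+1}H_j\big(R(x_2)/a\big)$, where
\[
 H_j(w):=\int_0^{\infty}\eta^{\,j}\,g(\eta+w)\,d\eta=\int_w^{\infty}(u-w)^{j}\,g(u)\,du .
\]
Hence, after the substitution $y=(x_2-t)/a^{\alpha}$ and the customary $L^1$–normalisation of the dilation (a fixed factor $a^{-(1+\alpha)}$), $\mathcal{T}^{(n,\alpha)}f(a,s,t)=a^{\,j}\int_{\R}H_j\big(R(t+a^{\alpha}y)/a\big)\,h(y)\,dy$; the $\R_-$–case is identical once each condition $\int g(t^{k})t^{m}\,dt=0$ is replaced by its $\int g(-t^{k})t^{m}\,dt=0$ analogue. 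The function $H_j$ carries all the moment information: from $H_j'=-jH_{j-1}$ and $H_0'=-g$ one gets $H_j^{(j+1)}=(-1)^{j+1}j!\,g$, so $\widehat{H_j}(\xi)=(-1)^{j+1}j!\,(i\xi)^{-(j+1)}\widehat g(\xi)$; since $g$ has $M$ ordinary vanishing moments, $\widehat g$ vanishes to order $M$ at $0$, whence $H_j\in\S(\R)$ — that the moments of $g$ of order $\le j<M-1$ vanish is exactly what makes $H_j$ rapidly decreasing at $-\infty$ as well — and $H_j$ has $M':=M-j-1$ ordinary vanishing moments. Pushing the higher–order conditions $\int_{\R}g(\pm u^{k})u^{m}\,du=0$ through the change of variables $u\mapsto u^{k}$ and the identity $\int_0^{\infty}H_j(w)w^{p}\,dw=B(p+1,j+1)\int_0^{\infty}g(u)u^{p+j+1}\,du$ shows moreover that $H_j$ has $M'$ vanishing moments of each order $k\le n$. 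Restrictiveness finally gives $H_j(0)=\int_0^{\infty}g(u)u^{j}\,du\neq0$ and $\int_{\R}h\neq0$.

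Next I would insert Taylor's formula $q(t+\sigma)=\sum_{\ell=0}^{n}\tfrac{q^{(\ell)}(t)}{\ell!}\sigma^{\ell}+r(\sigma)$, $|r(\sigma)|\le C|\sigma|^{n+1}$ near $0$, so that $R(t+\sigma)=\sum_{\ell}\tfrac{q^{(\ell)}(t)-s_\ell}{\ell!}\sigma^{\ell}+r(\sigma)$; the hypotheses $s_\ell=q^{(\ell)}(t)$ make the lowest–order term of $R$ jump up to the order of the first mismatched coefficient. Then split the $y$–integral at $|y|=a^{-\alpha/2}$: on $|y|>a^{-\alpha/2}$ the factor $h$ is super–polynomially small and contributes $\O(a^{N})$ for every $N$, while on $|y|\le a^{-\alpha/2}$ one has $a^{\alpha}|y|\to0$, so $R(t+a^{\alpha}y)/a$ is governed by its leading term. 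In case (1) that leading term is the constant $q(t)-s_0\neq0$, hence $|R(t+a^{\alpha}y)/a|\ge\tfrac12|q(t)-s_0|\,a^{-1}\to\infty$ uniformly there, and the rapid decay of $H_j$ gives $H_j(R/a)=\O(a^{N})$ for every $N$; multiplying by $a^{\,j}$ proves (1). In case (3) the leading term is $r(a^{\alpha}y)$, so $R(t+a^{\alpha}y)/a=\O\big(a^{(n+1)\alpha-1}|y|^{n+1}\big)\to0$ because $\alpha>\tfrac1{n+1}$; dominated convergence (the range $|y|$ large being controlled by the decay of $h$) together with $H_j(0)\neq0$ and $\int_{\R}h\neq0$ gives $\int_{\R}H_j(R/a)h(y)\,dy\to H_j(0)\int_{\R}h\neq0$, that is $\mathcal{T}^{(n,\alpha)}f(a,s,t)\sim a^{\,j}$.

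Case (2) is the substantial one. Here the leading term of $R(t+a^{\alpha}y)/a$ is $\lambda\,a^{k\alpha-1}y^{k}$ with $\lambda=(q^{(k)}(t)-s_k)/k!$, and $a^{k\alpha-1}\to\infty$ since $k<n$ and $\alpha<\tfrac1n$. I would rescale $y$ by $a^{(1-k\alpha)/k}$, so that the leading argument of $H_j$ becomes a fixed power $z^{k}$ (the factor $a^{(1-k\alpha)/k}$ coming out of $dy$ combines with the prefactor $a^{\,j}$ to $a^{\,j+1/k-\alpha}$); the remaining $a$–dependence then sits in $h$ evaluated at a point of size $a^{(1-k\alpha)/k}\to0$, in the sub–leading residual terms (carrying strictly positive powers of $a$), and in the error $r$. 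Taylor–expanding $h$ about $0$ and $H_j$ about $z^{k}$, the vanishing moments of $H_j$ of order $k$ annihilate the leading $kM'$ terms of the expansion of $h$; collecting the powers of $a$ contributed by the first surviving term, together with the prefactor, gives the bound $\O\big(a^{\,j-1+(M-j-1)[1-(k+1)\alpha]}\big)$. The main obstacle is precisely this last piece of bookkeeping: one must (i) prove carefully that $H_j$ inherits the higher–order vanishing moments of $g$ — this is where the change of variables $u\mapsto u^{k}$ and the Beta-function identity enter — and (ii) organise the nested Taylor expansions (in $h$, and in the sub–leading terms of $R$) together with the dominated-convergence estimates that justify passing to the limit uniformly in the rescaled variable, with separate attention to degenerate sub-cases (for instance small $M-j$ or small $k$), where one has to decide which of the several surviving contributions is the one of lowest order in $a$. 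Parts (1) and (3), by contrast, are routine once the reduction to $H_j$ and its elementary properties are in hand.
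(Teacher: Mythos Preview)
The paper does not actually prove Theorem~\ref{detect}. It is stated as the detection result underlying the whole construction and is taken from \cite{F17}; immediately after the statement the text continues ``Due to the detection result, the construction of a function $g\in\S_n^*(\R)$ is highly desirable\ldots'' with no proof environment in between. So there is no proof in this paper to compare your proposal against.

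On its own merits your outline is sound and matches the spirit of the machinery the paper does develop. The reduction of the $x_1$--integration to the auxiliary function $H_j(w)=\int_w^\infty(u-w)^jg(u)\,du$ is exactly the move the paper later exploits for numerics in Lemma~\ref{1d}; your argument that $H_j\in\S(\R)$ with $M-j-1$ ordinary vanishing moments is correct, and parts (1) and (3) then go through as you sketch. For part (2) your rescaling $y=a^{(1-k\alpha)/k}z$ is the natural one. The one point that deserves more than the word ``bookkeeping'' is the inheritance of the \emph{higher--order} vanishing moments by $H_j$: your Beta identity links $\int_0^\infty H_j(w)w^p\,dw$ to $\int_0^\infty g(u)u^{p+j+1}\,du$, i.e.\ to one--sided moments of $g$, whereas the hypothesis on $g$ is the two--sided condition $\int_\R g(\pm t^k)t^m\,dt=0$. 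For even $k$ this does yield the one--sided information you need (after the substitution $w=t^k$), and for odd $k$ the full--line moment of $H_j$ suffices and follows from your Fourier argument; but the two cases behave differently and must be separated explicitly. If you are willing to use the specific construction of this paper rather than a generic analyzing Taylorlet, Lemma~\ref{vanmom} hands you the one--sided moments of $g$ directly and the step becomes trivial.
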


Due to the detection result, the construction of a function $g\in \S_n^*(\R)$ is highly desirable, as the corresponding Taylorlet $\tau= g\otimes h$ allows for a very robust detection of the Taylor coefficients of the singularity function. Furthermore, such a Taylorlet simplifies said detection, as shown in the following corollary.

\begin{corollary}\label{cor}
Let $n\in\N$ and let $\tau$ be a restricitve analyzing Taylorlet of order $n$ with infinitely many vanishing moments in $x_1$-direction. Let furthermore $\alpha\in\left(\frac 1{n+1},\frac 1n\right)$, $j\ge 0$ and let $q\in C^\infty(\R)$ be the singularity function of
\[f(x)=\left[x_1-q(x_2)\right]^{j}\cdot \1_{\R_\pm}\left(x_1-q(x_2)\right).\]
Then
\[\mathcal{T}^{(n,\alpha)}f(a,s,t)=\O\big(a^N\big)\quad\text{for }a\to 0\]
for all $N>0$, if and only if there exists a $k\in\{0,\ldots,n\}$ such that $s_k\ne q^{(k)}(t)$.
\end{corollary}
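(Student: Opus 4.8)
The plan is to deduce the statement directly from Theorem~\ref{detect}, using all three of its cases together with the fact that a Taylorlet with infinitely many vanishing moments has $M$ vanishing moments for \emph{every} $M\in\N$. So as a preliminary remark: the hypotheses of the corollary make $\tau$ a restrictive analyzing Taylorlet of order $n$ with $M$ vanishing moments in $x_1$-direction for every $M$, and we are free to take $M>j+1$ arbitrarily large (which also disposes of the requirement $0\le j<M-1$ in Theorem~\ref{detect}). Since the assertion is an equivalence, I would prove the two implications separately.

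For ``$\Leftarrow$'', assume $s_k\ne q^{(k)}(t)$ for some $k$ and set $k_0:=\min\{k\in\{0,\dots,n\}:s_k\ne q^{(k)}(t)\}$, so that $s_\ell=q^{(\ell)}(t)$ for all $\ell<k_0$. If $k_0=0$, then $s_0\ne q(t)$ and, since $\alpha>0$, Theorem~\ref{detect}(1) gives at once $\mathcal{T}^{(n,\alpha)}f(a,s,t)=\O(a^N)$ for all $N>0$. If $k_0\ge 1$, put $k:=\min(k_0,n-1)$, so $k<n$ and $s_\ell=q^{(\ell)}(t)$ for all $\ell\in\{0,\dots,k-1\}$; then Theorem~\ref{detect}(2) yields, for every admissible $M$,
\[
\mathcal{T}^{(n,\alpha)}f(a,s,t)=\O\left(a^{j-1+(M-j-1)[1-(k+1)\alpha]}\right),\qquad a\to0.
\]
Because $k+1\le n$ and $\alpha<\tfrac1n$, we have $1-(k+1)\alpha\ge 1-n\alpha>0$, hence the exponent $j-1+(M-j-1)[1-(k+1)\alpha]$ tends to $+\infty$ as $M\to\infty$; letting $M$ grow converts the bound into $\mathcal{T}^{(n,\alpha)}f(a,s,t)=\O(a^N)$ for all $N>0$.

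For ``$\Rightarrow$'', I would argue by contraposition. If $s_k=q^{(k)}(t)$ for all $k\in\{0,\dots,n\}$, then, $\tau$ being restrictive and $\alpha>\tfrac1{n+1}$, Theorem~\ref{detect}(3) gives $\mathcal{T}^{(n,\alpha)}f(a,s,t)\sim a^j$ as $a\to0$; in particular $|\mathcal{T}^{(n,\alpha)}f(a,s,t)|\ge c\,a^j$ for some $c>0$ and all sufficiently small $a$, which is incompatible with $\mathcal{T}^{(n,\alpha)}f(a,s,t)=\O(a^N)$ for any $N>j$. Thus rapid decay forces some coefficient $s_k$ to differ from $q^{(k)}(t)$, as claimed.

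The point requiring care is the choice of the index $k$ in the ``$\Leftarrow$'' part: Theorem~\ref{detect}(2) is only stated for $k<n$, so when the first erroneous coefficient sits at the top order, $k_0=n$, one cannot take $k=n$; instead one applies case~(2) with $k=n-1$, which is admissible precisely because $s_0,\dots,s_{n-1}$ are still correct, and one leans on $\alpha<\tfrac1n$ (so that $1-n\alpha>0$) to keep the decay exponent divergent in $M$. The rest is bookkeeping: checking that $\alpha\in(\tfrac1{n+1},\tfrac1n)$ simultaneously satisfies $\alpha>\tfrac1{n+1}$, needed for case~(3), and $\alpha<\tfrac1n$, needed for case~(2), and that the unboundedness of $M$ upgrades the finite-order decay of case~(2) to super-polynomial decay.
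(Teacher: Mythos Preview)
Your proof is correct and is exactly the argument the paper has in mind: the corollary is stated without proof immediately after Theorem~\ref{detect}, and your derivation from the three cases of that theorem---using case~(1) when $s_0\ne q(t)$, case~(2) with $k=\min(k_0,n-1)$ and $M\to\infty$ when the first mismatch occurs at some $k_0\ge1$, and case~(3) for the converse---is the intended deduction. Your handling of the edge case $k_0=n$ (falling back to $k=n-1$ so that the hypothesis $k<n$ of case~(2) is met, while $\alpha<\tfrac1n$ keeps $1-n\alpha>0$) is the right move.
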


\section{q-Calculus}

Despite its modern applications in quantum mechanics, the first accounts of q-calculus actually date back to the days of Euler. When he developed the theory of partitions, he introduced the partition function $p:\N\to\N$ with $p(n)$ being the number of distinct ways of representing $n$ as a sum of natural numbers. He found out that the infinite product
\[\prod_{k=1}^\infty \frac 1{1-q^k}=\sum_{n=0}^\infty p(n) q^n\]
is the generating function for the partition function \cite{Er00}. Its reciprocal is also known as Euler's function.
\begin{definition}[Euler's function]
Let $q\in\C$ with $|q|<1$. Then,
\[\p(q)=\prod_{k=1}^\infty(1-q^k)\]
is Euler's function.
\end{definition}
This function is not to be confused with Euler's totient function which is also denoted by $\p$, but $\p(n)$ there displays the amount of numbers up to $n$ which are relative prime to $n$.

The concept of q-calculus is similar to that of the finite difference calculus, but the q-derivative of a function $f:\R\to\R$ is defined as
\[d_qf(x)=\frac{f(qx)-f(x)}{qx-x}\]
rather than $D_hf(x)=\frac{f(x+h)-f(x)}h.$ It is of particular importance in structures based on q-commutator relations such as the Manin plane. Such structures appear not only in relation to quantum groups, but also in terms of interpolation between the bosonic and the fermionic case \cite{BS91}. For a more general overview on q-calculus see \cite{Kac}.
Similar to the finite differences, the infinitesimal theory can be obtained by q-calculus via the limit process $q\to 1$.

As an example, the $q$-derivative of a monomial can be found to be
\[d_q(x^n)=\frac{q^n-1}{q-1}\cdot x^{n-1}.\]
The occuring factor is also known as the q-bracket $[n]_q=\frac{q^n-1}{q-1}$. This also yields a generalization of the classical binomial coefficient
\[\binom nk_q=\prod_{k=1}^{n}\frac{[n+1-k]_q}{[k]_q}.\]
\begin{table}
\centering
\begin{tabular}{|c|c|}
\hline
Infinitesimal calculus & q-calculus \\ \hline
$f'(x)=\lim_{q\to 1}\frac {f(qx)-f(x)}{qx-x}$ & $d_qf(x)=\frac {f(qx)-f(x)}{qx-x}$ \\ \hline
$\frac d{dx}\ x^n= n\cdot x^{n-1}$ & $d_q (x^n) =[n]_q\cdot x^{n-1}=\frac{q^n-1}{q-1}\cdot x^{n-1}$ \\ \hline
$n!$ & $[n]_q!=\prod_{k=1}^n [k]_q$ \\\hline
$\binom nk$ & $\binom nk_q=\frac{[n]_q!}{[k]_q!\cdot[n-k]_q!}$ \\\hline
$(a)_n$ & $(a;q)_n=\prod_{k=0}^{n-1}(1-aq^k)$ \\\hline
\end{tabular}
\caption{Overview of important q-analogs}
\end{table}
One of the most central concepts in q-calculus is the analog of the classical Pochammer symbol. It is defined as
\[(a;q)_n:=\prod_{k=0}^{n-1}\left(1-aq^{k}\right).\]
It can be represented in terms of the q-binomial as shown in the following lemma.
\begin{lemma}\label{qbin0}
\cite[(4.2.3)]{Ex83}
Let $x\in\C$, $q>0$ and $n\in\N$. Then
\[(x;q)_n=\sum_{k=0}^n \binom nk_{q}q^{\binom k2}(-1)^k\cdot x^k.\]
\end{lemma}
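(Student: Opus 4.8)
The plan is to prove the identity by induction on $n$, using the product recurrence $(x;q)_{n+1}=(x;q)_n\,(1-xq^n)$ together with a q-analog of Pascal's rule for the q-binomial coefficients. For $n=0$ both sides equal $1$, since $(x;q)_0$ is an empty product and the right-hand sum reduces to the single term $k=0$.

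For the inductive step, assume the formula holds for some $n\in\N$, multiply the induction hypothesis by $(1-xq^n)$, and expand the result into two sums. After shifting the summation index by one in the second sum, I would collect the coefficient of $x^k$ for each $k\in\{0,\ldots,n+1\}$. The coefficients of $x^0$ and $x^{n+1}$ are seen to match the claimed expression directly, using $\binom{n}{0}_q=\binom{n+1}{0}_q=\binom{n+1}{n+1}_q=1$ and the identity $\binom{n}{2}+n=\binom{n+1}{2}$ for the leading term.

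The heart of the argument is the range $1\le k\le n$. After cancelling a common factor $q^{\binom{k}{2}}$ and using $\binom{k-1}{2}-\binom{k}{2}=1-k$, matching the coefficient of $x^k$ comes down to the q-Pascal identity
\[
\binom{n+1}{k}_q=\binom{n}{k}_q+q^{\,n+1-k}\binom{n}{k-1}_q .
\]
This recurrence in turn follows by writing the two terms on the right over the common denominator $[k]_q!\,[n+1-k]_q!$ and simplifying with $[n+1-k]_q+q^{\,n+1-k}[k]_q=[n+1]_q$. This is the only step that requires genuine care with the bookkeeping of q-powers; everything else is routine reindexing.

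Alternatively, one could avoid induction and argue directly: expanding $\prod_{k=0}^{n-1}(1-xq^k)$ shows that the coefficient of $(-1)^k x^k$ is the $k$-th elementary symmetric polynomial evaluated at $1,q,\ldots,q^{n-1}$, and this value equals $q^{\binom{k}{2}}\binom{n}{k}_q$ by the standard evaluation of elementary symmetric functions on a geometric progression. I would nonetheless favour the inductive route, since it is self-contained once the q-Pascal rule is in hand, and the q-Pascal rule is exactly the main obstacle either way.
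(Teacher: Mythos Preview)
Your inductive argument is correct. The base case, the recurrence $(x;q)_{n+1}=(x;q)_n(1-xq^n)$, the reindexing, and the reduction of the middle coefficients to the q-Pascal rule
\[
\binom{n+1}{k}_q=\binom{n}{k}_q+q^{\,n+1-k}\binom{n}{k-1}_q
\]
are all handled accurately; in particular your verification of this recurrence via $[n+1-k]_q+q^{\,n+1-k}[k]_q=[n+1]_q$ goes through without issue. The alternative you sketch through elementary symmetric polynomials of $1,q,\ldots,q^{n-1}$ is equally valid.

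As for comparison with the paper: there is nothing to compare. The paper does not prove this lemma at all --- it simply quotes the identity as \cite[(4.2.3)]{Ex83} and moves on. You have therefore supplied a self-contained proof where the authors rely on a reference, which is a strict improvement in terms of exposition.
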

The statement of this lemma can be extended from $x\in\C$ to automorphism on $\C$-vector spaces as shown in the next corollary. This will be important later in this article.
\begin{corollary}\label{qbin}
Let $q>0$, $n\in\N$ and let $(V,\|\cdot\|)$ be a normed vector space over $\C$. Furthermore, let $A\in\mathcal{L}(V,V)$, \dt ie, $A:V\to V$ is linear and continuous with respect to $\|\cdot\|$. Then
\[ (A;q)_n := \prod_{k=0}^{n-1}\left(\mathrm{Id}-q^{k}\cdot A\right)\in\mathcal{L}(V,V) \]
and
\[(A;q)_n = \sum_{k=0}^n \binom nk_{q}q^{\binom k2}(-1)^k\cdot A^k. \]
\end{corollary}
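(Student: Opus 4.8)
The plan is to reduce the statement to the scalar identity of \cref{qbin0} by means of the polynomial evaluation homomorphism. First I would record the algebraic structure being used: $\mathcal{L}(V,V)$, with composition as multiplication, pointwise addition, and the usual scalar multiplication, is a unital associative algebra over $\C$, because a finite sum, a finite composition, and a scalar multiple of bounded linear maps is again bounded and linear. In particular $A^k\in\mathcal{L}(V,V)$ for every $k\in\N$ (with the convention $A^0=\mathrm{Id}$), each factor $\mathrm{Id}-q^k\cdot A$ lies in $\mathcal{L}(V,V)$, and hence so does the finite product $\prod_{k=0}^{n-1}(\mathrm{Id}-q^k\cdot A)$. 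This already settles the first assertion, that $(A;q)_n\in\mathcal{L}(V,V)$.

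For the identity itself I would pass to the polynomial ring $\C[X]$ and consider
\[
p(X)=\prod_{k=0}^{n-1}\bigl(1-q^k X\bigr),\qquad r(X)=\sum_{k=0}^{n}\binom nk_{q}q^{\binom k2}(-1)^k X^k .
\]
\Cref{qbin0} asserts precisely that $p(x)=r(x)$ for every $x\in\C$. Since $\C$ is an infinite field, two polynomials that agree at infinitely many points are equal, so $p=r$ as elements of $\C[X]$.

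Finally I would invoke the evaluation map $\mathrm{ev}_A\colon\C[X]\to\mathcal{L}(V,V)$, $\sum_i c_i X^i\mapsto\sum_i c_i A^i$, which is a unital $\C$-algebra homomorphism; applying it to the identity $p=r$ gives
\[
(A;q)_n=\prod_{k=0}^{n-1}\bigl(\mathrm{Id}-q^k\cdot A\bigr)=p(A)=r(A)=\sum_{k=0}^{n}\binom nk_{q}q^{\binom k2}(-1)^k\cdot A^k,
\]
which is the claim. I do not expect a genuine obstacle here: the only point requiring a moment's care is that $\mathrm{ev}_A$ is multiplicative, and this holds because the substitution involves only powers of the single operator $A$, which commute with one another, so the noncommutativity of $\mathcal{L}(V,V)$ plays no role. (If one prefers to avoid the language of $\C[X]$, the same conclusion follows by expanding $\prod_{k=0}^{n-1}(\mathrm{Id}-q^k\cdot A)$ directly and matching coefficients of $A^k$ with those obtained from \cref{qbin0} by expanding $\prod_{k=0}^{n-1}(1-q^k x)$; the combinatorial bookkeeping is identical in the two settings precisely because powers of $A$ commute.)
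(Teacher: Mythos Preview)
Your proof is correct. The paper's own argument is extremely terse: it observes that each factor $\mathrm{Id}-q^k A$ lies in $\mathcal{L}(V,V)$ and then simply says that, since the family $\{\mathrm{Id}-q^k A\}_k\cup\{A\}$ commutes, the proof of the identity is ``analogous'' to that of \cref{qbin0}. Your route via the evaluation homomorphism $\mathrm{ev}_A\colon\C[X]\to\mathcal{L}(V,V)$ is a cleaner packaging of the same idea: rather than re-running the induction of \cref{qbin0} in the operator setting, you use the scalar result as a black box and transport it through a ring homomorphism, the multiplicativity of which is justified by exactly the commutativity the paper invokes. Your parenthetical alternative (expand and match coefficients of $A^k$) is precisely what the paper has in mind. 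Either way the content is the same; your version has the advantage of making explicit why the noncommutativity of $\mathcal{L}(V,V)$ is harmless here.
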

\begin{proof}
Since $A\in\mathcal{L}(V,V)$, we have $\mathrm{Id}-q^{k}\cdot A\in\mathcal{L}(V,V)$ for all $q\in\C$ and all $k\in\N$ and so
\[ \prod_{k=0}^{n-1}\left(\mathrm{Id}-q^{k}\cdot A\right) \in \mathcal{L}(V,V)\quad\text{for all }q\in\C,n\in\N. \]
Since $\{\mathrm{Id}-q^{k}\cdot A\}_{k\in\N}\cup\{A\}$ commute, the proof for the identity
\[(A;q)_n = \sum_{k=0}^n \binom nk_{q}q^{\binom k2}(-1)^k\cdot A^k \]
is analogous to the case of Lemma \ref{qbin0}.
\end{proof}

The q-Pochhammer symbol also is the initial point for a multitude of important functions in q-calculus. Among them, the Euler function can be represented as
\[\p(q)=(q;q)_\infty.\]
As we will see in \Cref{construction} in the next section, the Euler function can also be expanded as a series of q-Pochhammer symbols.

\section{Construction}

In order to obtain a function $g\in \S_n^*(\R)$, it is sufficient to construct a function $\psi$ such that
\begin{enumerate}
\item $\psi$ is even,
\item $\psi^{(k)}(0)=c\cdot\delta_{0k}$ for some $c\ne 0$,
\item $\psi\in\S_1^\ast(\R)$.
\end{enumerate}
As the following proposition shows, with such a function $\psi$, we can construct a function with infinitely many vanishing moments of arbitrary order $n$.

\begin{proposition}\label{sqrt}
Let the function $\psi\in\S(\R)$ fulfill the conditions 1. and 2. and let $v_{n}:=\lcm\{1,\ldots,n\}$ be the least common multiple of the numbers $1,\ldots,n$.
\begin{enumerate}
\item[a)] If $M\in\N$ and $\psi$ has $Mv_n$ vanishing moments, the function
\[g:=\psi\circ \sqrt[v_{n}]{|\cdot|}\in\S(\R)\]
and has $M$ vanishing moments of order $n$.
\item[b)] If $\psi$ fulfills condition 3., the function
\[g:=\psi\circ \sqrt[v_{n}]{|\cdot|}\in\S_n^*(\R).\]
\end{enumerate}
\end{proposition}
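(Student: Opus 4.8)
The plan is to reduce b) to a) and then treat a) in two independent parts. Part b) is immediate from a): condition 3 says $\psi$ has infinitely many vanishing moments, so $\psi$ has $Mv_n$ vanishing moments for every $M\in\N$, and a) then gives that $g=\psi\circ\sqrt[v_n]{|\cdot|}$ has $M$ vanishing moments of order $n$ for every $M$, i.e. $g\in\S_n^*(\R)$. For a) I would prove separately that (I) $g\in\S(\R)$ and (II) $g$ has $M$ vanishing moments of order $n$.

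For (I) the only delicate point is the origin, since on $\R\setminus\{0\}$ the function $g$ is a composition of smooth maps. Here I would exploit condition 2: setting $\rho:=\psi-\psi(0)$, condition 2 gives $\rho^{(j)}(0)=0$ for all $j\ge0$, so Taylor's theorem with remainder yields, for every $N$ and every $j$, the flatness estimate $\rho^{(j)}(x)=\O(|x|^N)$ as $x\to0$. Since $g(x)=\psi(0)+\rho(|x|^{1/v_n})$, repeated application of the chain rule (Fa\`a di Bruno) writes $g^{(j)}(x)$ for $x>0$ as a finite sum of terms $\rho^{(i)}(x^{1/v_n})\,x^{-\beta_i}$ with $1\le i\le j$ and the $\beta_i$ bounded in terms of $j$ and $v_n$; the flatness estimate makes each such term tend to $0$ as $x\to0^+$, so all one-sided derivatives $g^{(j)}(0^+)$ exist and vanish for $j\ge1$, and since $g$ is even the left-sided derivatives agree, giving $g\in C^\infty(\R)$. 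The decay at infinity is handled the same way: Fa\`a di Bruno expresses $g^{(j)}(x)$, for large $|x|$, as $\sum_i\psi^{(i)}(|x|^{1/v_n})$ times a Laurent polynomial in $|x|^{1/v_n}$, and $\psi\in\S(\R)$ makes each $\psi^{(i)}$ decay faster than any power of its argument and hence faster than any power of $|x|$. Thus $g\in\S(\R)$.

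For (II) I would fix $k\in\{1,\ldots,n\}$ and $m\in\{0,\ldots,kM-1\}$ and note that $g(\pm t^k)=\psi(|t|^{k/v_n})$ is an even function of $t$, so $\int_\R g(\pm t^k)t^m\,dt$ vanishes for odd $m$ and equals $2\int_0^\infty\psi(t^{k/v_n})t^m\,dt$ for even $m$. The substitution $u=t^{k/v_n}$ — legitimate because $k\mid v_n$, so $t=u^{v_n/k}$ with $v_n/k\in\N$ — turns this into $\tfrac{2v_n}{k}\int_0^\infty\psi(u)\,u^{(m+1)v_n/k-1}\,du$. The exponent $p:=(m+1)\tfrac{v_n}{k}-1$ is a nonnegative integer, and the bound $m\le kM-1$ is precisely what forces $p\le Mv_n-1$, so $p$ falls in the range covered by the vanishing-moment hypothesis on $\psi$; invoking that hypothesis together with the evenness of $\psi$ (to pass between the half-line and full-line integrals of $\psi(u)u^p$) gives $\int_0^\infty\psi(u)u^p\,du=0$, hence $\int_\R g(\pm t^k)t^m\,dt=0$ for all admissible $k$ and $m$.

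I expect the main obstacle to be the $C^\infty$-regularity of $g$ at the origin: composing with $\sqrt[v_n]{|\cdot|}$, which is not even once-differentiable there, is exactly what makes conditions 1 and 2 essential, and the chain-rule bookkeeping showing that the flatness of $\psi-\psi(0)$ defeats the negative powers of $x$ is the technical core. In (II) the one thing to watch is matching the parity of the exponent $p$ with that of $m$ so that the evenness of $\psi$ can be brought to bear; beyond that it is a change of variables and a counting argument tuned by the choice $v_n=\lcm\{1,\ldots,n\}$.
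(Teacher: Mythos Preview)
Your overall strategy---reduce b) to a), and for a) separate the regularity at $0$ from the moment computation via the substitution $u=t^{k/v_n}$---is exactly the paper's, and your treatment of $C^\infty$-regularity at the origin is more careful than the paper's one-line remark.

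There is, however, a genuine gap in step (II), precisely at the point you flag as ``the one thing to watch.'' After restricting to even $m$ and substituting, you reach $\int_0^\infty\psi(u)\,u^p\,du$ with $p=(m+1)v_n/k-1$, and you propose to use the evenness of $\psi$ to pass from the half-line to the full-line integral. That passage is valid only when $p$ is even, and $p$ is \emph{not} always even: with $n=2$, $v_n=2$, $k=1$, $m=0$ one gets $p=1$. For odd $p$ the full-line condition $\int_\R\psi(u)u^p\,du=0$ is automatic from the evenness of $\psi$ and carries no information about $\int_0^\infty\psi(u)u^p\,du$. In fact the statement as literally written already fails here: if $\chi$ is an even bump identically $1$ near $0$, then $\psi=\chi-\tfrac19\,\chi(\cdot/9)$ satisfies conditions 1 and 2 and has two full-line vanishing moments, yet for $g=\psi\circ\sqrt{|\cdot|}$ one computes $\int_\R g(t)\,dt=4\int_0^\infty\psi(u)\,u\,du\ne0$.

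The paper's own computation glosses over the same difficulty (it performs the substitution over all of $\R$ with $u^{m v_n/k}\,u^{v_n/k-1}$ in place of the correct $|u|$-powers, which is wrong exactly when $v_n/k$ is even and $m$ is even). What actually makes the result go through in the paper's setting is that the particular $\psi$ constructed there has \emph{one-sided} vanishing moments $\int_{\R_\pm}\psi(u)\,u^\ell\,du=0$ (Lemma~\ref{vanmom}); under that stronger hypothesis your half-line integral vanishes directly for every $p\le Mv_n-1$ and no parity bookkeeping is needed. So the fix is to assume one-sided vanishing moments of $\psi$, not merely full-line ones.
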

\begin{proof}
\begin{enumerate}[label=\textit{\alph*)}]
\item The decay conditions of $\psi$ are not changed by the concatenation with $\sqrt[v_{n}]{|\cdot|}$ and its smoothness is preserved as well due to condition 2. Hence, $g\in\S(\R)$. Hence, it only remains to prove, that $g$ has $M$ vanishing moments of order $n$.
\begin{align*}
\int_{\R} g\big(\pm t^k\big) t^m dt &= \int_{\R} \psi\big(|t|^{k/v_n}\big) t^m dt \\
&= \int_{\R} \psi(|u|) u^{m v_n/k}\cdot\frac{v_n}{k} u^{v_n/k-1}du \\
&= \frac{v_n}k\cdot \int_{\R} \psi(u) u^{(m+1) v_n/k-1} du=0
\end{align*}
for all $m\in\{0,\ldots,kM-1\}$, since $k$ divides $v_n$ for all $k\in\{1,\ldots,n\}$.
\item This follows immediately from a).

\end{enumerate}
\end{proof}

For the construction of a function $\psi$ with properties 1. - 3. we start with an even bump function $\phi\in C_{c}^{\infty}(\R)$ and a number $\e>0$ such that $\phi\big|_{\left[-\e,\e\right]}\equiv 1.$ Hence, properties 1. and 2. are already fulfilled, $\phi$ is a Schwartz function and we only need to gather vanishing moments. This can be achieved for $q>1$ by the following function sequence:
\[\phi_{m+1}=\left(\Id-q^{-(m+1)}D_{\frac 1q}\right)\phi_{m},\]
where for $a>0$, $D_a$ is the dilation operator with $D_a:L^\infty(\R)\to L^\infty(\R)$, $D_af(x)=f(ax)$. 

The innate connection between the function sequence $\phi_m$ and the q-derivative can be seen in the following proposition.
\begin{proposition}\label{Fourier}
Let $n\in\N$, $\phi_0\in L^1(\R,x^ndx)$ and
\[\phi_{m+1}=\left(\Id-q^{-(m+1)}\cdot D_\frac 1q\right)\phi_m\]
for all $m\in\N$. Then 
\[\widehat{\phi_n}(\omega)=(1-q)^n\cdot\omega^n\cdot d_q^n\widehat{\phi_0}(\omega).\]
\end{proposition}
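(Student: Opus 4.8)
The plan is a straightforward induction on $n$, driven by the interplay between dilation on the space side and the $q$-difference quotient on the frequency side.

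First I would record how the building block of the recursion behaves under the Fourier transform. For $a>0$ the dilation $D_af(x)=f(ax)$ satisfies $\widehat{D_af}(\omega)=a^{-1}\widehat f(\omega/a)$, so in particular $\widehat{D_{1/q}\phi_m}(\omega)=q\cdot\widehat{\phi_m}(q\omega)$. Applying $\widehat{\;\cdot\;}$ to $\phi_{m+1}=\big(\Id-q^{-(m+1)}D_{1/q}\big)\phi_m$ then gives the frequency-side recursion
\[\widehat{\phi_{m+1}}(\omega)=\widehat{\phi_m}(\omega)-q^{-m}\cdot\widehat{\phi_m}(q\omega)\qquad(m\in\N).\]
Before using this I would note that $D_{1/q}$ maps $L^1(\R,x^n\,dx)$ into itself, so every $\phi_m$ lies in $L^1(\R,x^n\,dx)$; hence each $\widehat{\phi_m}$ is of class $C^n$, every pointwise identity below is legitimate, and the iterated $q$-derivative $d_q^n\widehat{\phi_0}$ is well defined even at $\omega=0$, where it equals $\widehat{\phi_0}^{(n)}(0)$.

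The second ingredient is the trivial identity $f(q\omega)-f(\omega)=(q-1)\cdot\omega\cdot d_qf(\omega)$, equivalently $f(\omega)-f(q\omega)=(1-q)\cdot\omega\cdot d_qf(\omega)$, which is just the definition of $d_q$ rearranged. Now I induct on $n$. The base case $n=0$ is the tautology $\widehat{\phi_0}=\widehat{\phi_0}$. Assuming $\widehat{\phi_n}(\omega)=(1-q)^n\omega^n\,d_q^n\widehat{\phi_0}(\omega)$, I substitute this into the recursion with $m=n$: evaluating the identity at $q\omega$ produces a factor $(q\omega)^n=q^n\omega^n$ which cancels the $q^{-n}$ in front of $\widehat{\phi_n}(q\omega)$, leaving
\[\widehat{\phi_{n+1}}(\omega)=(1-q)^n\omega^n\big(d_q^n\widehat{\phi_0}(\omega)-d_q^n\widehat{\phi_0}(q\omega)\big).\]
Applying the elementary identity with $f=d_q^n\widehat{\phi_0}$ turns the bracket into $(1-q)\cdot\omega\cdot d_q^{n+1}\widehat{\phi_0}(\omega)$, whence $\widehat{\phi_{n+1}}(\omega)=(1-q)^{n+1}\omega^{n+1}\,d_q^{n+1}\widehat{\phi_0}(\omega)$, closing the induction.

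There is no genuine obstacle here; the only points that need care are the bookkeeping of the powers of $q$ — the exponent $-(m+1)$ in the defining recursion becomes $-m$ on the frequency side because of the Jacobian factor $q$ from the dilation, and this is exactly what is needed to absorb the $q^n$ coming from evaluation at $q\omega$ — and the value at $\omega=0$, where each displayed formula secretly contains a division by $\omega$ inside the $q$-derivative; this is removed by continuity, which is where the hypothesis $\phi_0\in L^1(\R,x^n\,dx)$ is used.
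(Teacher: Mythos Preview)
Your induction is correct and cleanly executed; the bookkeeping with the powers of $q$ and the handling of $\omega=0$ are both fine.

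However, your route is genuinely different from the paper's. The paper does \emph{not} induct. Instead it first expands $\phi_n$ in closed form via the operator-valued $q$-Pochhammer symbol (Corollary~\ref{qbin}),
\[
\phi_n=\left(q^{-1}D_{1/q};q^{-1}\right)_n\phi_0=\sum_{k=0}^n \binom nk_{1/q}q^{-\binom k2}(-1)^k q^{-k}D_{q^{-k}}\phi_0,
\]
then quotes a known closed formula for $d_q^n$ as a weighted sum of dilates, and finally matches the two sums term by term after converting $\binom nk_{1/q}$ into $q^{-k(n-k)}\binom nk_q$ and using $\binom{a+b}{2}=\binom a2+ab+\binom b2$. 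Your argument bypasses all of this $q$-binomial machinery: one application of the definition of $d_q$ per step suffices, so the proof is shorter and more elementary. The only advantage of the paper's approach is that it produces the explicit expansion of $\phi_n$ as a sum of dilates of $\phi_0$ along the way, but that formula is re-derived independently later (in the proof of Lemma~\ref{error}), so nothing is lost by taking your shortcut.
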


\begin{proof}
We first look for an appropriate representation of $\phi_n$. To this end, we will utilize Corollary \ref{qbin}. Since the dilation operator is linear, we can write $\phi_n$ as an operator-valued q-Pochhammer symbol
\[\phi_n=\prod_{m=0}^{n-1} \left(\Id-q^{-(m+1)}D_{\frac 1q}\right)\phi_0 = \left(q^{-1} D_{\frac 1q}; q^{-1}\right)_n\phi_0\]
and obtain that
\begin{equation}\label{phi}
\phi_n=\sum_{k=0}^n \binom nk_{\frac 1q}q^{-\binom k2}(-1)^k\cdot q^{-k}D_{q^{-k}}\phi_0.
\end{equation}
Due to \cite[(6.98)]{Er12}, the $n^{\mathrm{th}}$ q-derivative of a function can be represented as
\[d_q^nf(x)=(q-1)^{-n}q^{-\binom n2}x^{-n}\sum_{k=0}^n\binom nk_q q^{\binom k2}(-1)^k f\left(q^{n-k}x\right).\]
Hence, we have to show that
\begin{equation}\label{goal}
\widehat{\phi_n}(\omega) = q^{-\binom n2}\sum_{k=0}^n\binom nk_q q^{\binom k2}(-1)^{n-k} \widehat{\phi_0}\left(q^{n-k}\omega\right).
\end{equation}
To this end, we first represent $\binom nk_{\frac 1q}$ in terms of $\binom nk_q$. We can write
\begin{align*}
\binom nk_{\frac 1q} &= \prod_{\ell=1}^k \frac{1-q^{\ell-n-1}}{1-q^{-\ell}} \\
&= \prod_{\ell=1}^k \frac{q^{\ell-n-1}}{q^{-\ell}}\cdot \frac{q^{n+1-\ell}-1}{q^{\ell}-1} \\
&= q^{2\binom{k+1}2-k(n+1)}\cdot \prod_{\ell=1}^k \frac{q^{n+1-\ell}-1}{q^{\ell}-1} \\
&= q^{-k(n-k)}\cdot \binom nk_q.
\end{align*}
By applying the Fourier transform to equation (\ref{phi}) and inserting the upper equality, we get
\begin{align*}
\widehat{\phi_n}(\omega) &= \sum_{k=0}^n \binom nk_{\frac 1q}q^{-\binom k2}(-1)^k\widehat{\phi_0}\left(q^k \omega \right) \\
&= \sum_{k=0}^n \binom nk_{\frac 1q}q^{-\binom {n-k}2}(-1)^{n-k}\widehat{\phi_0}\left(q^{n-k}\omega\right) \\
&= \sum_{k=0}^n \binom nk_q q^{-k(n-k)} q^{-\binom {n-k}2}(-1)^{n-k}\widehat{\phi_0}\left(q^{n-k}\omega\right) \\
&= q^{-\binom n2}\cdot\sum_{k=0}^n \binom nk_q q^{\binom k2} (-1)^{n-k}\widehat{\phi_0}\left(\textcolor{black}{q^{n-k}\omega}\right),
\end{align*}
where the last equality results from $\binom {a+b}2=\binom a2+ab+\binom b2$. 
\end{proof}

As a consequence of this proposition, each step of the recursion \[\phi_{m+1}=(\Id-q^{-(m+1)}D_{1/q})\phi_{m}\] generates one further vanishing moment. As the next lemma shows, not only $\phi_m$, but also its restriction to $\R_+$ or $\R_-$ features $m$ vanishing moments.

\begin{lemma}\label{vanmom}
Let $\phi_0\in\S(\R)$ be an even function and let
\[\phi_{m+1}=\left(\Id-q^{-(m+1)}D_{\frac 1q}\right)\phi_{m}\]
for all $m\in\N$. Then $\phi_m\in\S(\R)$ and
\[\int_{\R_\pm}\phi_m(x) x^\ell dx=0\]
for all $\ell\in\{0,\ldots,m-1\}$ and for all $m\in\N$.
\end{lemma}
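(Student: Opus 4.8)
The plan is to argue by induction on $m$, treating the two assertions — membership in $\S(\R)$ and the half-line vanishing moments — separately. For the Schwartz property, I would simply observe that the dilation $D_{1/q}$ maps $\S(\R)$ into itself (since $q>1$, the map $f\mapsto f(\cdot/q)$ preserves smoothness and rapid decay of all derivatives) and that $\S(\R)$ is a vector space; hence $\phi_m\in\S(\R)$ implies $\phi_{m+1}=\phi_m-q^{-(m+1)}D_{1/q}\phi_m\in\S(\R)$, with the base case $\phi_0\in\S(\R)$ being an assumption. This is in any case already implicit in \Cref{Fourier}.

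For the moment conditions, the first step is to record how the recursion acts on a single half-line integral. Fixing a sign and writing $I_m(\ell):=\int_{\R_\pm}\phi_m(x)x^\ell dx$ (which converges for every $\ell\in\N$ because $\phi_m\in\S(\R)$), the substitution $u=x/q$ in the dilation term — which maps each half-line to itself, $q>0$ — yields $\int_{\R_\pm}\phi_m(x/q)x^\ell dx=q^{\ell+1}\int_{\R_\pm}\phi_m(u)u^\ell du=q^{\ell+1}I_m(\ell)$, and therefore
\[ I_{m+1}(\ell)=I_m(\ell)-q^{-(m+1)}q^{\ell+1}I_m(\ell)=\big(1-q^{\ell-m}\big)\,I_m(\ell)\qquad\text{for all }\ell,m\in\N. \]
The \emph{crucial} feature here is the prefactor $1-q^{\ell-m}$: it is nonzero for $\ell\ne m$ but vanishes exactly when $\ell=m$.

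The induction on $m$ is then immediate. The base case $m=0$ is vacuous, since the index set $\{0,\dots,m-1\}$ is empty. Assuming $I_m(\ell)=0$ for all $\ell\in\{0,\dots,m-1\}$, I must show $I_{m+1}(\ell)=0$ for all $\ell\in\{0,\dots,m\}$: for $\ell\le m-1$ this is $I_{m+1}(\ell)=(1-q^{\ell-m})I_m(\ell)=0$ by the inductive hypothesis, while for $\ell=m$ it holds because the prefactor $1-q^{m-m}=0$ annihilates $I_m(m)$ regardless of its value. Since the identity above, and hence the whole argument, applies verbatim to $\R_+$ and to $\R_-$, this establishes the claim for both half-lines.

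There is no genuine obstacle in this proof; the only points requiring care are the bookkeeping of the index ranges and the recognition that the newly generated vanishing moment of order $\ell=m$ at each step comes not from the inductive hypothesis but from the vanishing of the prefactor $1-q^{\ell-m}$ — which is precisely why the exponent $-(m+1)$ appears in the definition of $\phi_{m+1}$. One may also note in passing that the evenness of $\phi_0$ is not actually used in this lemma, since the recursion decouples the two half-lines; it is carried along only because the $\phi_m$ occurring in the construction happen to be even.
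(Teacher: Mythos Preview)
Your proof is correct and follows essentially the same inductive argument as the paper: both split the recursion into the two half-line integrals, use the substitution $u=x/q$ to compare $\int_{\R_\pm}\phi_m(x/q)x^\ell\,dx$ with $\int_{\R_\pm}\phi_m(u)u^\ell\,du$, and obtain the new moment at level $\ell=m$ from the exact cancellation that your prefactor $1-q^{\ell-m}$ makes explicit. Your presentation is slightly tidier in packaging both cases into a single identity, and your side remark that evenness of $\phi_0$ is not actually used here is correct.
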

\begin{proof}
The statement can be shown inductively. Obviously, the statement is true for $\phi_0$. Now we assume, that $\phi_m\in\S(\R)$ and $\int_{\R_\pm}\phi_m(x) x^\ell dx=0$ for all $\ell\in\{0,\ldots,m-1\}$. It can be easily confirmed that $\phi_{m+1}\in\S(\R)$. Furthermore, for all $\ell\in\{0,\ldots,m-1\}$, we have
\[\int_{\R_\pm}\phi_{m+1}(x)x^\ell dx=\underbrace{\int_{\R_\pm}\phi_{m}(x)x^\ell dx}_{=0} - q^{-(m+1)}\cdot\underbrace{\int_{\R_\pm}\phi_m\left(\frac xq\right) x^\ell dx}_{=0}.\]
Furthermore,
\begin{align*}
\int_{\R_\pm}\phi_{m+1}(x)x^mdx &= \int_{\R_\pm} \left[\phi_m(x)-q^{-m-1}\phi_m\left(\frac xq\right)\right]x^m dx \\
&= \int_{\R_\pm} \phi_m(x)x^m dx - \int_{\R_\pm}\phi_m\left(\frac xq\right) \cdot\left(\frac xq\right)^m \frac{dx}q \\
&= \int_{\R_\pm} \phi_m(x)x^m dx - \int_{\R_\pm} \phi_m(x)x^m dx \\
&= 0.
\end{align*}
\end{proof}

Since $\int_0^\infty \phi_m(x)dx =0$ for all $m\ge 1$ as a consequence of the previous lemma, we cannot immediately use the functions $\phi_m$ to produce restrictive Taylorlets. We will present a method to achieve this in Lemma \ref{rest}. The next lemma shows that the sequence $\phi_m$ converges to a function satisfying the properties 1. - 3. 

\begin{lemma}\label{existence}
Let $\phi_{0}\in C_{c}^{\infty}(\R)$ and let $q>1$ and $\e>0$ such that $\phi_{0}\big|_{\left[-\e,\e\right]}\equiv 1.$ and let 
\[\phi_{m+1}=\left(\mathrm{Id}-q^{-(m+1)}D_{\frac 1q}\right)\phi_{m}\]
 for all $m\in\N$. Then the function sequence $\phi_m$ converges uniformly to $\psi$ for $m\to\infty$ and $\psi$ fulfills conditions 1. - 3.
\end{lemma}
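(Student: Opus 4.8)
The plan is to prove convergence not merely uniformly but in the Schwartz topology, which forces $\psi\in\S(\R)$ (as needed for condition 3., since $\S_1^*(\R)\subseteq\S(\R)$), and then to extract properties 1.--3.\ from the explicit shape of the recursion. Write $\|f\|_{a,b}:=\sup_{x\in\R}\lvert x^a f^{(b)}(x)\rvert$ for $a,b\in\N$. Because $\phi_{m+1}-\phi_m=-q^{-(m+1)}D_{1/q}\phi_m$ and $(D_{1/q}\phi_m)^{(b)}(x)=q^{-b}\phi_m^{(b)}(x/q)$, the substitution $x\mapsto qx$ gives
\[\|\phi_{m+1}-\phi_m\|_{a,b}=q^{\,a-b-(m+1)}\|\phi_m\|_{a,b}\qquad\text{and}\qquad\|\phi_{m+1}\|_{a,b}\le\bigl(1+q^{\,a-b-(m+1)}\bigr)\|\phi_m\|_{a,b}.\]
Since $q>1$, for each fixed $a,b$ the series $\sum_m q^{\,a-b-(m+1)}$ converges, so the infinite product $\prod_{m\ge1}\bigl(1+q^{\,a-b-m}\bigr)$ converges; hence $\sup_m\|\phi_m\|_{a,b}<\infty$, and feeding this back into the first identity shows $\sum_m\|\phi_{m+1}-\phi_m\|_{a,b}<\infty$. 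Thus $(\phi_m)_m$ is Cauchy in every seminorm, and by completeness of $\S(\R)$ it converges there to some $\psi\in\S(\R)$; the case $a=b=0$ is exactly the claimed uniform convergence.

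Next I would verify conditions 1.\ and 2. Evenness of $\phi_0$ passes to $D_{1/q}\phi_m$, hence to every $\phi_m$ and to the uniform limit $\psi$, giving 1. For 2., note that $q>1$ forces $x/q\in[-\e,\e]$ whenever $x\in[-\e,\e]$; since $\phi_0\equiv1$ on $[-\e,\e]$, a one-line induction yields $\phi_m\equiv c_m$ there, with $c_0=1$ and $c_{m+1}=(1-q^{-(m+1)})c_m$, that is $c_m=\prod_{k=1}^m(1-q^{-k})$. Passing to the limit (using uniform convergence) gives $\psi\equiv c$ on $[-\e,\e]$, where $c=\prod_{k=1}^\infty(1-q^{-k})=\p(q^{-1})$. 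As $0<q^{-1}<1$ and $\sum_k q^{-k}<\infty$, this product converges to a strictly positive number, so $c\ne0$; being locally constant at the origin, $\psi$ satisfies $\psi^{(k)}(0)=c\,\delta_{0k}$.

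Finally, for condition 3., \Cref{vanmom} gives $\int_\R\phi_n(x)x^m\,dx=0$ whenever $n>m$. The functional $f\mapsto\int_\R f(x)x^m\,dx$ is continuous on $\S(\R)$ (the monomial $x^m$ defines a tempered distribution), and $\phi_n\to\psi$ in $\S(\R)$ by the first step, so passing to the limit yields $\int_\R\psi(x)x^m\,dx=0$ for all $m\in\N$. Replacing $t$ by $-t$ only multiplies this integral by $(-1)^m$, so we also get $\int_\R\psi(\pm t)t^m\,dt=0$ for all $m$, which is precisely $\psi\in\S_1^*(\R)$.

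The hard part will be the first step: one has to arrange the seminorm recursion so that the decaying weight $q^{-(m+1)}$ beats the factor $q^{a-b}$ that appears when the dilation $D_{1/q}$ acts on $x^a f^{(b)}$, and then use completeness of $\S(\R)$ to upgrade ``uniformly bounded seminorms'' to an honest Schwartz limit (rather than a merely continuous one). With convergence in $\S(\R)$ secured, conditions 1.--3.\ are routine bookkeeping.
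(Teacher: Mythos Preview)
Your argument is correct. The overall architecture matches the paper's---prove convergence, then verify conditions 1.--3.\ in turn---but your convergence step is genuinely simpler. The paper expands $\phi_{m+\ell}-\phi_m$ via the operator-valued $q$-binomial identity (Corollary~\ref{qbin}) to bound $\|\phi_{m+\ell}-\phi_m\|_\infty$ directly, then afterwards shows the seminorms $\|x^k\phi_m^{(\ell)}\|_\infty$ are uniformly bounded by the same one-step recursion you use; you instead observe that this one-step recursion already gives $\|\phi_{m+1}-\phi_m\|_{a,b}=q^{\,a-b-(m+1)}\|\phi_m\|_{a,b}$ and telescope, which yields Cauchy in every seminorm without touching the $q$-Pochhammer machinery and delivers $\psi\in\S(\R)$ immediately by completeness. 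That is both shorter and more rigorous on the point of why the limit lies in $\S(\R)$ (the paper infers this from uniform seminorm bounds plus uniform convergence, which strictly speaking needs an additional remark). What the paper's route buys is thematic coherence: it exhibits the recursion as an operator $q$-Pochhammer symbol, which feeds into Proposition~\ref{Fourier}, Lemma~\ref{sum} and Theorem~\ref{construction}. Your handling of conditions 1.--3.\ is essentially the same as the paper's, with the pleasant extra that passing moments to the limit is justified cleanly by continuity of $f\mapsto\int f(x)x^m\,dx$ on $\S(\R)$.
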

\begin{proof}
We first show that $\phi_m$ is a Cauchy sequence \dtt wrt to the $L^\infty$-norm and hence its uniform convergence. To this end, let $\ell,m\in\N$. Then
\begin{align*}
\|\phi_{m+\ell}-\phi_m\|_\infty &=\left\|\left[\prod\left(\Id-q^{-(m+1)}D_\frac 1q\right)-\Id\right]\phi_m\right\|_\infty \\
&=\left\| \left[\left(q^{-(m+1)}D_\frac 1q;q^{-1}\right)_\ell-\Id \right]\phi_m \right\|_\infty.
\end{align*}
Utilizing Corollary \ref{qbin}, we obtain
\begin{align*}
\|\phi_{m+\ell}-\phi_m\|_\infty &=\left\| \left[\sum_{k=0}^\ell(-1)^kq^{-\binom k2}\cdot\binom\ell k_\frac 1q\cdot q^{-k(m+1)}D_{q^{-k}}-\Id \right]\phi_m \right\|_\infty \\
&\le \left[\sum_{k=0}^\ell q^{-\left[\binom k2+k(m+1)\right]}\cdot\binom\ell k_\frac 1q-1\right]\cdot\|\phi_m\|_\infty \\
&=\|\phi_m\|_\infty\cdot\sum_{k=1}^\ell q^{-\left[\binom k2+k(m+1)\right]}\cdot\prod_{\nu=0}^{k-1}\frac{1-q^{\ell-\nu}}{1-q^{-(\nu+1)}} \\
&\le\|\phi_m\|_\infty\cdot\sum_{k=1}^\ell q^{-\left[\binom k2+k(m+1)\right]}\cdot\prod_{\nu=0}^{\infty}\frac1{1-q^{-(\nu+1)}} \\
&=\|\phi_m\|_\infty\cdot\sum_{k=1}^\ell q^{-\left[\binom k2+k(m+1)\right]}\cdot\frac 1{\p\left(\frac 1q\right)}.
\end{align*}
Since $\sum_{k=1}^\ell q^{-\left[\binom k2+k(m+1)\right]}\sim q^{-(m+1)}$ for $k\to\infty$ and $\p(a)\ne 0$ for all $a\in(0,1)$, it only remains to show that $\phi_m$ has a uniform upper bound. To this end, we observe that for $m\in\N$ we have
\begin{align*}
\left\|\phi_{m+1}\right\|_{\infty} &= \left\|\left(\mathrm{Id}-q^{-(m+1)}D_{\frac 1q}\right)\phi_{m}\right\|_{\infty} \\
&\le \left\|\phi_{m}\right\|_{\infty}+q^{-(m+1)}\left\|D_\frac 1q\phi_m\right\|_{\infty} \\
&\le \left(1+q^{-(m+1)}\right)\cdot \left\|\phi_{m}\right\|_{\infty}.
\end{align*}
Hence, we can inductively show that
\begin{align*}
\left\|\phi_{m+1}\right\|_{\infty} &\le \prod_{k=0}^{m}\left(1+q^{-(k+1)}\right)\cdot \left\|\phi_{0}\right\|_{\infty} \\
&\le \left\|\phi_{0}\right\|_{\infty}\cdot \exp\left(\sum_{k=0}^{m}\log(1+q^{-(k+1)})\right)
\end{align*}
We hence only need to prove that the series $\sum_{k=0}^\infty\log\left(1+q^{-(k+1)}\right)$ converges. This can be achieved by utilizing the estimate
\[0<\log(1+x)<x\quad\text{for all }x>0.\]
We obtain
\[\sum_{k=0}^\infty\log\left(1+q^{-(k+1)}\right)<\sum_{k=0}^\infty q^{-(k+1)}=\frac q{1-\frac 1q}<\infty.\]
We now proceed by proving the properties 1. - 3. for $\psi$.
\begin{enumerate}
\item Since $\phi_{0}$ is even and the constructive function sequence consists of linear combinations of dilates of $\phi_{0}$, $\psi$ is even, as well.
\item As $\phi_{0}\big|_{\left[-\e,\e\right]}\equiv 1$, we only have to show that $\psi(0)\ne 0$. We can represent the limit function as
\[\psi=\prod_{k=0}^{\infty}\left(\Id-q^{-(k+1)}D_{\frac 1q}\right)\phi_{0}.\]
Due to $\phi_{0}(0)=1$, we obtain that
\[\psi(0)=\prod_{k=0}^{\infty}\left(1-q^{-(k+1)}\right)=\exp\left(\sum_{k=0}^{\infty}\log\left(1-q^{-(k+1)}\right)\right).\]
Due to the concavity of the logarithm, we obtain that
\[\log (1-x)\ge q\log\left(1-\frac 1q\right)\cdot x\quad\text{for all }x\in\left[0,\frac 1q\right].\]
Hence, we obtain that
\[\sum_{k=0}^{\infty}\log\left(1-q^{-(k+1)}\right)\ge q\log\left(1-\frac 1q\right)\cdot\sum_{k=0}^{\infty}q^{-(k+1)}=\frac {\log\left(1-\frac 1q\right)}{1-\frac 1q}.\]
Due to the monotonicity of the exponential function, we then obtain
\[\psi(0)=\exp\left(\sum_{k=0}^{\infty}\log\left(1-q^{-(k+1)}\right)\right)\ge \exp\left(\frac {\log\left(1-\frac 1q\right)}{1-\frac 1q}\right)=\left(\frac {q-1}q\right)^{\frac q{q-1}}>0,\]
since $q>1.$
\item As shown in Lemma \ref{vanmom}, $\phi_{m}$ has $m$ vanishing moments for all $m\in\N$. Hence, $\psi$ has infinitely many vanishing moments. So it remains to prove that $\psi\in\S(\R)$. To this end, we define \[c_{k,\ell,m}:=\|x^{k}\phi_{m}^{(\ell)}(x)\|_{\infty}.\] In order to prove that $\psi\in\S(\R)$, we will show that uniform upper bounds in $m$ exist for the $c_{k,\ell,m}$. \dt Ie, for all $k,\ell\in\N$ we determine a $c_{k,\ell}>0$ such that
\[c_{k,\ell,m}\le c_{k,\ell}\text{ for all }m\in\N.\]
For this purpose we estimate $c_{k,\ell,m+1}$ in terms of $c_{k,\ell,m}$.
\[x^{k}\cdot\phi_{m+1}^{(\ell)}(x)=x^{k}\cdot\partial_{x}^{\ell}\left(\mathrm{Id}-q^{-(m+1)}D_{\frac 1q}\right)\phi_{m}(x)=x^{k}\cdot\left(\mathrm{Id}-q^{-(m+\ell+1)}D_{\frac 1q}\right)\phi_{m}^{(\ell)}(x).\]
Hence, we can estimate
\begin{align*}
c_{k,\ell,m+1} & = \left\|x^{k}\phi_{m+1}^{(\ell)}(x)\right\|_{\infty} \\
&= \left\|x^{k}\cdot\left(\mathrm{Id}-q^{-(m+\ell+1)}D_{\frac 1q}\right)\phi_{m}^{(\ell)}(x)\right\|_{\infty} \\
&\le \left\|x^{k}\phi_{m}^{(\ell)}(x)\right\|_{\infty}+q^{-(m+\ell-k+1)}\left\|\left(\frac xq\right)^{k}\phi_{m}^{(\ell)}\left(\frac xq\right)\right\|_{\infty} \\
&\le \left(1+q^{-(m+\ell-k+1)}\right)\cdot c_{k,\ell,m} \\
&\le \prod_{\nu=0}^{m}\left(1+q^{-(\nu+\ell-k+1)}\right)\cdot c_{k,\ell,0} \\
&\le c_{k,\ell,0}\cdot \prod_{\nu=0}^{m}(1+q^{k-\ell-1-\nu}) \\
&\le c_{k,\ell,0}\cdot \exp\left(\sum_{\nu=0}^{m}\log(1+q^{k-\ell-1-\nu})\right).
\end{align*}
Hence, it is sufficient to prove that the series $\sum_{\nu=0}^{\infty}\log(1+q^{k-\ell-1-\nu})$ converges. To this end, we exploit the following estimate of the logarithm:
\[0<\log(1+x) \le x \quad \text{for all }x>0.\]
With this we obtain
\[\sum_{\nu=0}^{\infty}\log\left(1+q^{k-\ell-1-\nu}\right)\le\sum_{\nu=0}^{\infty}q^{k-\ell-1-\nu}=\frac {q^{k-\ell}}{q-1}.\]
Thus, $c_{k,\ell,m}\le \mathrm{e}^{\frac {q^{k-\ell}}{q-1}}\cdot c_{k,\ell,0}=:c_{k,\ell}$ for all $m\in\N.$ Since $\phi_{0}\in\S(\R)$ by prerequisite, $c_{k,\ell,0}$ is finite for all $k,\ell\in\N$.

\end{enumerate}
\end{proof}

The next lemma gives an explicit representation of $\psi$ as a series of dilates of $\phi_{0}$ by using the $q$-Pochhammer symbol.

\begin{lemma}\label{sum}
Let $\phi_{0}\in\S(\R)$ and let $q>1$. Then
\[\psi=\lim_{m\to\infty}\phi_{m}=\sum_{\ell=0}^{\infty}\frac 1{(q;q)_\ell}\cdot D_{q^{-\ell}}\phi_{0}.\]
\end{lemma}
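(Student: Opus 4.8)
The plan is to pass to the limit $m\to\infty$ term by term in the explicit finite-sum representation of $\phi_m$ obtained in the proof of Proposition \ref{Fourier}: by equation (\ref{phi}),
\[\phi_n=\sum_{k=0}^n a_{n,k}\,D_{q^{-k}}\phi_0,\qquad a_{n,k}:=\binom nk_{1/q}\,q^{-\binom k2}(-1)^kq^{-k},\]
and, extending by $a_{n,k}:=0$ for $k>n$, we may view this as an infinite series. The statement then reduces to two points: (i) computing $\lim_{n\to\infty}a_{n,k}$ for each fixed $k$, and (ii) producing an $n$-independent, summable majorant $|a_{n,k}|\le c_k$. Once these are in place, dominated convergence for series (Tannery's theorem), applied in $(L^\infty(\R),\|\cdot\|_\infty)$ and using $\|D_{q^{-k}}\phi_0\|_\infty=\|\phi_0\|_\infty$, gives that $\phi_n$ converges uniformly to $\sum_{k\ge0}(\lim_n a_{n,k})\,D_{q^{-k}}\phi_0$; since $\psi$ is by definition the limit of $(\phi_m)$, this is the desired identity.

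For (i) I would start from the product form $\binom nk_{1/q}=\prod_{\ell=1}^k\frac{1-q^{-(n+1-\ell)}}{1-q^{-\ell}}$ used in the proof of Proposition \ref{Fourier}. Because $q>1$, every numerator $1-q^{-(n+1-\ell)}\to1$ as $n\to\infty$, so $\binom nk_{1/q}\to\prod_{\ell=1}^k(1-q^{-\ell})^{-1}$. Then the elementary identity $1-q^{-\ell}=-q^{-\ell}(1-q^{\ell})$ gives $\prod_{\ell=1}^k(1-q^{-\ell})=(-1)^k q^{-\binom{k+1}2}(q;q)_k$, and combining this with $\binom{k+1}2-\binom k2=k$ all the powers of $q$ and all the signs in $a_{n,k}$ cancel, leaving $\lim_{n\to\infty}a_{n,k}=1/(q;q)_k$.

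For (ii), note that for $1\le\ell\le k\le n$ each factor $1-q^{-(n+1-\ell)}$ lies in $(0,1)$, while the partial products $\prod_{\ell=1}^k(1-q^{-\ell})$ are decreasing in $k$ and bounded below by the positive number $\p(1/q)=\prod_{\ell=1}^\infty(1-q^{-\ell})$. Hence $0<\binom nk_{1/q}\le 1/\p(1/q)$ for all $n$, so that $|a_{n,k}|\le q^{-\binom k2-k}/\p(1/q)=:c_k$, and $\sum_{k\ge0}c_k<\infty$ by the super-exponential decay of $q^{-\binom k2-k}$. This is precisely the majorant required in the first paragraph.

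I expect the only delicate point to be the bookkeeping in step (i): checking that the $q$-powers and signs in $\binom nk_{1/q}q^{-\binom k2}(-1)^kq^{-k}$ collapse exactly to $1/(q;q)_k$ in the limit---the identities $1-q^{-\ell}=-q^{-\ell}(1-q^{\ell})$ and $\binom{k+1}2-\binom k2=k$ are what drive this---together with the observation in step (ii) that the finite products $\prod_{\ell=1}^k(1-q^{-\ell})$ stay above $\p(1/q)$, which is what makes the majorant uniform in $n$ and hence legitimizes exchanging the limit with the infinite sum.
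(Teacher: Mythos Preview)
Your argument is correct, and in fact more self-contained than the paper's. The paper does not take a term-by-term limit in \eqref{phi}; instead it writes $\psi=(q^{-1}D_{1/q};q^{-1})_\infty\phi_0$ and then invokes Euler's classical series expansion $(z;p)_\infty=\sum_{n\ge0}\frac{(-1)^n p^{n(n-1)/2}}{(p;p)_n}z^n$ with $p=q^{-1}$ and with the operator $z=q^{-1}D_{1/q}$ plugged in, after which a short algebraic simplification produces the coefficients $1/(q;q)_\ell$. So the paper outsources the limiting step to a known $q$-identity and appeals to commutativity of the dilations to transfer it to the operator setting, whereas you prove the limit directly via a Tannery/dominated-convergence argument. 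Your route is slightly longer but has the advantage of making the uniform convergence explicit and of not relying on the operator-valued use of Euler's formula; the paper's route is shorter once one is willing to accept that the scalar identity carries over verbatim to commuting bounded operators on $L^\infty$.
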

\begin{proof}
We can rewrite the function $\psi$ as
\[\psi=\prod_{m=0}^\infty\left(\Id-q^{-(m+1)}\cdot D_\frac 1q\right)\phi_0=\left(\frac 1q\cdot D_{\frac 1q};\frac 1q\right)_\infty\phi_0.\]
Due to a result of \cite[Chapter 16]{Eu48}, the following series expansion holds:
\[(z;p)_\infty=\sum_{n=0}^\infty\frac{(-1)^n\cdot p^{n(n-1)/2}}{(p;p)_n}\cdot z^n\]
for $|p|<1$ and for all $z\in\C.$ Since the dilation operator commutes with the multiplication with constants, we can rewrite the limit function $\psi$ as
\begin{align*}
\psi &= \left(\frac 1q\cdot D_{\frac 1q};\frac 1q\right)_\infty\hskip-2mm\phi_0 \\
& = \sum_{\ell=0}^\infty\frac{(-1)^\ell\cdot q^{-\ell(\ell-1)/2}}{(q^{-1};q^{-1})_\ell}\cdot q^{-\ell}\cdot D_{q^{-\ell}}\phi_0 \\
& = \sum_{\ell=0}^\infty\frac{(-1)^\ell}{q^{(\ell+1)\ell/2}\cdot \prod_{k=1}^\ell (1-q^{-k})}\cdot D_{q^{-\ell}}\phi_0 \\
& = \sum_{\ell=0}^\infty\frac{1}{(q;q)_\ell}\cdot D_{q^{-\ell}}\phi_0
\end{align*}

\end{proof}

The next theorem utilizes all of the previous lemmata to show an explicit formula for the limit function $\psi\in\S^*(\R).$

\begin{theorem}\label{construction}
Let $q>1$, $\e>0$ and let $\phi_0\in C_c^\infty(\R)$ be of the form
\[\phi_0(x)=\begin{cases}
1 & \text{for }|x|\le \e, \\
\eta(|x|) & \text{for }|x|\in\big(\e,\e q\big] \\
0 & \text{for } |x|>\e q.
\end{cases},\]
where $\eta\in C^\infty\left(\left[\e,q\e\right]\right)$ is chosen so that $\phi_0\in C^\infty(\R)$. Then, $\psi=\prod_{m=0}^\infty \left(\Id-q^{-(m+1)}D_{\frac 1q}\right)\phi_0$ has the explicit representation
\[\psi(x)=\p\left(\frac 1q\right)+\sum_{\ell=0}^\infty \left[\frac 1{(q;q)_\ell}\cdot\eta\left(\frac{|x|}{q^\ell}\right)-\sum_{k=0}^\ell \frac 1{(q;q)_k}\right]\cdot\1_{\left(\e q^{\ell};\e q^{\ell+1}\right]}(|x|),\]
where $\p$ is the Euler function.
\end{theorem}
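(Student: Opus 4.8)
The plan is to start from the series representation of $\psi$ established in Lemma~\ref{sum}, namely
$\psi(x)=\sum_{\ell=0}^\infty \frac{1}{(q;q)_\ell}\,\phi_0(q^{-\ell}x)$, and to evaluate the summand $\phi_0(q^{-\ell}x)$ explicitly using the piecewise definition of $\phi_0$. The key observation is that for a fixed $x$, as $\ell$ ranges over $\N$ the dilated argument $q^{-\ell}|x|$ decreases from $|x|$ towards $0$, so it first lies in the outer region $\{|\cdot|>\e q\}$ (where $\phi_0=0$), then passes through the transition ring $(\e,\e q]$ (where $\phi_0=\eta(|\cdot|)$) for \emph{exactly one} value of $\ell$, and then stays in $[-\e,\e]$ (where $\phi_0=1$) for all remaining $\ell$.

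Before the case analysis I would record the auxiliary identity $\sum_{\ell=0}^\infty \frac{1}{(q;q)_\ell}=\p\!\left(\tfrac1q\right)$. This is immediate: evaluating the representation of Lemma~\ref{sum} at $x=0$ and using $\phi_0(0)=1$ gives $\psi(0)=\sum_{\ell=0}^\infty \frac{1}{(q;q)_\ell}$, while the proof of Lemma~\ref{existence} computes $\psi(0)=\prod_{k=1}^\infty(1-q^{-k})=\p\!\left(\tfrac1q\right)$. Absolute convergence of this series is clear since $|(q;q)_\ell|=\prod_{k=1}^\ell(q^k-1)$ grows super-exponentially in $\ell$, which also justifies the rearrangement of the tail used below.

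Next I carry out the case analysis on $x$. If $|x|\le\e$, then $q^{-\ell}|x|\le\e$ for every $\ell\ge0$, so every summand equals $\frac{1}{(q;q)_\ell}$ and $\psi(x)=\p\!\left(\tfrac1q\right)$; on the other hand all indicators $\1_{(\e q^\ell,\e q^{\ell+1}]}(|x|)$ vanish, so the claimed formula also reduces to $\p\!\left(\tfrac1q\right)$. If $|x|>\e$, let $L\ge0$ be the unique integer with $|x|\in(\e q^L,\e q^{L+1}]$ (so $q^{-L}|x|\in(\e,q\e]$, i.e.\ in the domain of $\eta$). A short computation with the three defining cases of $\phi_0$, taking care of the half-open endpoints, shows $\phi_0(q^{-\ell}x)=0$ for $0\le\ell\le L-1$, $\phi_0(q^{-L}x)=\eta(q^{-L}|x|)$, and $\phi_0(q^{-\ell}x)=1$ for $\ell\ge L+1$. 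Substituting and using the auxiliary identity to rewrite $\sum_{\ell=L+1}^\infty \frac{1}{(q;q)_\ell}=\p\!\left(\tfrac1q\right)-\sum_{k=0}^L\frac{1}{(q;q)_k}$ yields $\psi(x)=\p\!\left(\tfrac1q\right)+\frac{1}{(q;q)_L}\eta(q^{-L}|x|)-\sum_{k=0}^L\frac{1}{(q;q)_k}$, which is exactly the $\ell=L$ term of the series in the statement, every other term being killed by its indicator since $|x|$ lies in precisely one ring $(\e q^\ell,\e q^{\ell+1}]$.

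The argument is essentially bookkeeping once Lemma~\ref{sum} is in hand; the only point that needs genuine care is the consistent treatment of the interval endpoints (that $\phi_0\equiv1$ on the \emph{closed} interval $[-\e,\e]$, whereas the transition ring $(\e,\e q]$ and the rings $(\e q^\ell,\e q^{\ell+1}]$ are half-open), so that each $x$ is assigned to exactly one ring and the index $L$ is well defined. I do not anticipate any analytic obstacle beyond this.
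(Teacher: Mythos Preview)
Your proposal is correct and follows essentially the same route as the paper: both start from Lemma~\ref{sum}, exploit the three-piece structure of $\phi_0$ under dilation, and establish the identity $\sum_{\ell=0}^\infty (q;q)_\ell^{-1}=\p(1/q)$ by evaluating $\psi(0)$ in two ways. The only cosmetic difference is that the paper carries the indicator functions through a global rearrangement of the double sum, whereas you fix $x$, identify the unique ring index $L$, and read off the value directly; the content is the same.
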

\begin{proof}
Due to the form of $\phi_0$ and Lemma \ref{sum} we obtain that 
\begin{align*}
\psi(x) &= \sum_{\ell=0}^\infty \frac 1{(q;q)_\ell}\cdot D_{q^{-\ell}}\phi_0(x) \\
&= \sum_{\ell=0}^\infty \frac 1{(q;q)_\ell}\cdot \left[\1_{\left[-\e q^{\ell},\e q^{\ell}\right]}(x)+\left(D_{q^{-\ell}}\eta\right)(|x|)\cdot\1_{\left(\e q^{\ell},\e q^{\ell+1}\right]}(|x|)\right] \\
&= \sum_{\ell=0}^\infty \frac 1{(q;q)_\ell}\cdot \left(\sum_{k=0}^{\ell-1}\1_{\left(\e q^{k},\e q^{k+1}\right]}(|x|)+\1_{\left[-\e ,\e \right]}(x)+\eta\left(q^{-\ell}\cdot |x|\right)\cdot\1_{\left(\e q^{\ell},\e q^{\ell+1}\right]}(|x|)\right) \\
&= \sum_{\ell=0}^\infty \frac 1{(q;q)_\ell}\cdot \1_{\left[-\e ,\e \right]}(x) + \sum_{k=0}^\infty\1_{\left(\e q^{k},\e q^{k+1}\right]}(|x|)\cdot\sum_{\ell=k+1}^\infty \frac 1{(q;q)_\ell} \\
&\quad +\sum_{\ell=0}^\infty\frac 1{(q;q)_\ell}\cdot\eta\left(q^{-\ell}\cdot |x|\right)\cdot\1_{\left(\e q^{\ell},\e q^{\ell+1}\right]}(|x|).
\end{align*}
By inserting $x=0$ into the upper equation, we obtain that
\[\psi(0)=\sum_{\ell=0}^\infty \frac 1{(q;q)_\ell}.\]
By repeating this for the equation 
\[\psi(x)=\left[\prod_{m=0}^\infty\left(\Id-q^{-(m+1)}\cdot D_{\frac 1q}\right)\phi_0\right](x),\]
we can conclude with $\phi_0(0)=1$ and $(D_af)(0)=(\Id f)(0)$ for all $a>0$ and all $f\in C(\R)$ that 
\begin{align*}
\sum_{\ell=0}^\infty \frac 1{(q;q)_\ell} & =\psi(0)=\left[\prod_{m=0}^\infty\left(\Id-q^{-(m+1)}\cdot \Id\right)\phi_0\right](0)=\prod_{m=0}^\infty\left(1-q^{-(m+1)}\right) \\
& =\prod_{m=1}^\infty\left(1-q^{-m}\right)=\p\left(\frac1q\right).
\end{align*}
This equation together with $\sum_{k=\ell+1}^\infty\frac 1{(q;q)_k}=\p\left(\frac 1q\right)-\sum_{k=0}^\ell\frac 1{(q;q)_k}$ delivers
\[\psi(x)=\p\left(\frac 1q\right)+\sum_{\ell=0}^\infty \left[\frac 1{(q;q)_\ell}\cdot\eta\left(\frac{|x|}{q^\ell}\right)-\sum_{k=0}^\ell \frac 1{(q;q)_k}\right]\cdot\1_{\left(\e q^{\ell};\e q^{\ell+1}\right]}(|x|).\]

\end{proof}
We can also utilize that $\p(x)$ is known for special values of $x$. For instance, we know that
\[\p\left(e^{-\pi}\right)=\frac{e^{\frac\pi{24}}\cdot \Gamma\left(\frac 14\right)}{2^{\frac78}\cdot\pi^{\frac 34}}\]
due to \cite[p. 326]{Be05}. With the choice $q=e^\pi$, we hence obtain the following formula for a function in $\S_1^*(\R)$:
\[\psi(x)=\frac{e^{\frac\pi{24}}\cdot \Gamma\left(\frac 14\right)}{2^{\frac78}\cdot\pi^{\frac 34}}+\sum_{\ell=0}^\infty \left[\frac 1{(e^\pi;e^\pi)_\ell}\cdot \eta(e^{-\ell\pi}|x|)-\sum_{k=0}^\ell \frac 1{(e^{\pi};e^{\pi})_k}\right]\cdot\1_{\left(\e e^{\ell\pi};\e e^{(\ell+1)\pi}\right]}(|x|).\]

\section{Numerical treatment}

In this section we will give a numerical analysis of the computations involved in the explicit formula presented in \Cref{construction} in the previous section.

For a numerical computation of the Euler function, we can utilize Euler's pentagonal number theorem \cite[equation (3.1)]{Er12}, which states that
\[\phi(q) = 1+\sum_{m=1}^\infty (-1)^m\left(q^{m(3m-1)/2}+q^{m(3m+1)/2}\right)\quad\text{for }0<|q|<1.\]
This series expansion provides an excellent error estimate, since
\[\left|\phi(q)-1-\sum_{m=1}^n (-1)^m\left(q^{m(3m-1)/2}+q^{m(3m+1)/2}\right)\right| \le 2q^{(n+1)(3n+2)/2}\quad\text{for }0<|q|<1,\]
as we can easily prove.

Now we will adress the evaluation of the function $\psi$ presented in \Cref{construction}. As we are mainly interested in a function with vanishing moments, we will approximate $\psi$ by a function
\[\phi_n=\prod_{m=0}^{n-1}\left(\Id-q^{-(m+1)}D_{\frac 1q}\right)\phi_0,\]
which has $n$ vanishing moments itself. In the next lemma we prove an approximation error for $\psi$. 

\begin{lemma}\label{error}
Let $q\ge 2$ and $\phi_0$ be given as in \Cref{construction} and let \[\phi_n=\prod_{m=0}^{n-1}\left(\Id-q^{-(m+1)}D_{\frac 1q}\right)\phi_0. \] Then we have
\[\|\psi-\phi_n\|_\infty\le 5q^{-(n+1)}.\]
\end{lemma}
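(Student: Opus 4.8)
The plan is to exploit the telescoping structure of the defining recursion together with the uniform bound on $\|\phi_m\|_\infty$ that is already established inside the proof of \Cref{existence}. Since $\phi_m\to\psi$ uniformly, I would write, for any $N>n$,
\[
\phi_N-\phi_n=\sum_{\ell=n+1}^{N}\bigl(\phi_\ell-\phi_{\ell-1}\bigr)=-\sum_{\ell=n+1}^{N}q^{-\ell}D_{1/q}\phi_{\ell-1},
\]
where the second equality uses $\phi_\ell-\phi_{\ell-1}=\bigl(\Id-q^{-\ell}D_{1/q}\bigr)\phi_{\ell-1}-\phi_{\ell-1}=-q^{-\ell}D_{1/q}\phi_{\ell-1}$. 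Letting $N\to\infty$ and using $\|D_{1/q}f\|_\infty=\|f\|_\infty$, this yields
\[
\|\psi-\phi_n\|_\infty\le\sum_{\ell=n+1}^{\infty}q^{-\ell}\,\|\phi_{\ell-1}\|_\infty .
\]

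Next I would insert the sup-norm bound. The proof of \Cref{existence} shows $\|\phi_m\|_\infty\le\prod_{k=1}^{m}\bigl(1+q^{-k}\bigr)\,\|\phi_0\|_\infty$, and since $\phi_0$ is the plateau bump of \Cref{construction} (with $\|\phi_0\|_\infty=1$ for the standard monotone choice of the transition profile $\eta$), we get $\|\phi_{\ell-1}\|_\infty\le P(q):=\prod_{k=1}^{\infty}\bigl(1+q^{-k}\bigr)$ for every $\ell$. Summing the geometric series then gives
\[
\|\psi-\phi_n\|_\infty\le P(q)\sum_{\ell=n+1}^{\infty}q^{-\ell}=\frac{q}{q-1}\,P(q)\,q^{-(n+1)},
\]
so everything reduces to checking that $\tfrac{q}{q-1}P(q)\le 5$ for all $q\ge2$.

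The only delicate point is this last numerical estimate. The crude bound $P(q)\le\exp\bigl(\sum_{k\ge1}q^{-k}\bigr)=\exp\bigl(\tfrac1{q-1}\bigr)\le e$ together with $\tfrac{q}{q-1}\le2$ only yields $2e\approx5.44$, which is not good enough; the fix is to peel off the first factor, writing $P(q)=(1+q^{-1})\prod_{k\ge2}\bigl(1+q^{-k}\bigr)\le\tfrac{q+1}{q}\exp\bigl(\tfrac1{q(q-1)}\bigr)$ via $\log(1+x)\le x$ on the tail, whence $\tfrac{q}{q-1}P(q)\le\tfrac{q+1}{q-1}\exp\bigl(\tfrac1{q(q-1)}\bigr)$. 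Both factors on the right decrease in $q$, so for $q\ge2$ the product is at most $3\sqrt{e}<5$, which closes the argument. Thus the obstacle here is not conceptual but purely quantitative—squeezing the constant below the stated $5$—and it is handled by this small refinement of the product estimate.
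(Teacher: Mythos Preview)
Your proof is correct, and it is genuinely different from the paper's argument. The paper does not telescope: instead it expands $\phi_n$ via the operator $q$-binomial identity (\Cref{qbin}) as $\phi_n=\sum_{k=0}^n\frac{(-1)^k}{(q;q)_k}(q^{-n};q)_k\,D_{q^{-k}}\phi_0$, compares this termwise with the series representation $\psi=\sum_{k\ge0}\frac{1}{(q;q)_k}D_{q^{-k}}\phi_0$ from \Cref{sum}, and splits the error into a finite piece $\text{(I)}=\sum_{k=1}^{n}\bigl|\tfrac{1-(q^{-n};q)_k}{(q;q)_k}\bigr|$ and a tail $\text{(II)}$. The bulk of the paper's work is controlling (I) through an alternating-sum argument showing that the coefficients $c_\ell=q^{-n\ell+\binom{\ell}{2}}\binom{k}{\ell}_q$ decrease, which yields $|1-(q^{-n};q)_k|\le q^{-n}[k]_q$; combining (I) and (II) then produces the constant $5$ via $\tfrac{1}{q-1}\le 2q^{-1}$ and $\sum_{k\ge0}|(q;q)_k|^{-1}\le\tfrac{5}{2}$.

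Your telescoping route is noticeably more economical: it recycles the uniform bound $\|\phi_m\|_\infty\le P(q)$ already established in \Cref{existence} and avoids all the $q$-binomial manipulations and the monotonicity argument for the $c_\ell$. The paper's approach, on the other hand, makes the $q$-calculus structure explicit and shows exactly how the approximation error decomposes across the $q$-Pochhammer coefficients, which ties in with the theme of the article. One small point worth noting: both arguments silently use $\|\phi_0\|_\infty\le1$ (the paper drops $\|D_{q^{-k}}\phi_0\|_\infty$ when passing to (I)+(II)); you flagged this explicitly for the monotone transition profile, which is the intended setting.
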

\begin{proof}
We first look for an appropriate representation of $\phi_n$. To this end, we will use Corollary \ref{qbin}. We consider
\begin{align*}
\phi_n &= \prod_{m=0}^{n-1}\left(\Id-q^{-(m+1)}D_{\frac 1q}\right)\phi_0 \\
&= (q^{-1}\cdot D_{\frac 1q};q^{-1})_n\phi_0 \\
&= \sum_{k=0}^n (-1)^kq^{-\binom{k+1}{2}}\cdot\binom nk_{\frac 1q}\cdot D_{q^{-k}}\phi_0\qquad\qquad\qquad (\text{using Corollary }\ref{qbin}) \\
&= \sum_{k=0}^n (-1)^kq^{-\binom{k+1}{2}}\cdot\prod_{\ell=0}^{k-1}\frac{1-q^{\ell-n}}{1-q^{-(\ell+1)}}\cdot D_{q^{-k}}\phi_0 \\
&= \sum_{k=0}^n \frac {(-1)^k}{(q;q)_k}\cdot\prod_{\ell=0}^{k-1}(1-q^{\ell-n})\cdot D_{q^{-k}}\phi_0 \\
&= \sum_{k=0}^n \frac {(-1)^k}{(q;q)_k}\cdot(q^{-n};q)_k\cdot D_{q^{-k}}\phi_0
\end{align*}
Hence, we can write the $L^\infty$-error of the approximation as
\begin{align*}
\|\psi-\phi_n\|_\infty &= \left\| \sum_{k=0}^{n}(-1)^k\cdot \frac {1-(q^{-n};q)_k}{(q;q)_k} D_{q^{-k}}\phi_0 + \sum_{k=n+1}^\infty \frac {(-1)^k}{(q;q)_k} D_{q^{-k}}\phi_0\right\|_\infty \\
&\le \underbrace{\sum_{k=1}^{n} \left| \frac {1-(q^{-n};q)_k}{(q;q)_k}\right|}_{=:\text{(I)}} + \underbrace{\sum_{k=n+1}^\infty \left|\frac 1{(q;q)_k}\right|}_{=:\text{(II)}}.
\end{align*}
For the estimation of (I), we utilize Lemma \ref{qbin0} to obtain
\begin{equation}\label{cell}
1-(q^{-n};q)_k = -\sum_{\ell=1}^k (-1)^{\ell} q^{-n\ell}\cdot q^{\binom\ell 2}\cdot\binom k\ell_q=-\sum_{\ell=1}^k(-1)^\ell c_\ell.
\end{equation}
We will now show that under the given conditions, $\{c_\ell\}_\ell$ is a monotonically decreasing sequence. We observe that
\begin{align*}
c_\ell-c_{\ell+1} &= q^{-n\ell}\cdot q^{\binom\ell 2}\cdot\binom k\ell_q - q^{-n(\ell+1)}\cdot q^{\binom{\ell+1} 2}\cdot\binom k{\ell+1}_q \\
&= q^{-n(\ell+1)}\cdot q^{\binom{\ell} 2}\cdot \binom k\ell_q\cdot \left(q^n -q^\ell\cdot \frac{q^{k-\ell}-1}{q^{\ell+1}-1}\right) \\
&= \frac{q^{-n(\ell+1)}}{q^{\ell+1}-1}\cdot q^{\binom{\ell} 2}\cdot \binom k\ell_q\cdot \left(q^n\cdot (q^{\ell+1}-1) -q^\ell \cdot(q^{k-\ell}-1)\right) \\
&= \frac{q^{-n(\ell+1)}}{q^{\ell+1}-1}\cdot q^{\binom{\ell+1} 2}\cdot \binom k\ell_q\cdot \left(q^{n+1}-q^{n-\ell}-q^{k-\ell}+1\right).
\end{align*}
The last factor is positive, if $q\ge2$, $\ell\ge0$ and $k\le n$. The latter is the case, since (I) covers only the cases where $1\le k\le n$.
Since $c_\ell$ is a positive, monotonically decreasing sequence, the sum in \cref{cell} is an alternating sum over a decreasing sequence. Hence, we obtain the estimate
\[|1-(q^{-n};q)_k| \le c_1 = q^{-n}\cdot\binom k1_q=q^{-n}\cdot \frac{q^k-1}{q-1}.\]
This delivers the following estimate for (I):
\begin{align*}
(\text{I}) &= \sum_{k=1}^n\left|\frac {1-(q^{-n};q)_k}{(q;q)_k}\right| \\
&\le \frac 1{q^{n}(q-1)}\cdot\sum_{k=1}^n (q^k-1)\cdot\frac 1{|(q;q)_k|} \\
&= \frac 1{q^{n}(q-1)}\cdot\sum_{k=1}^n\frac 1{|(q;q)_{k-1}|}.
\end{align*}
By adding up (II) and the estimate for (I), we obtain
\begin{align*}
\|\psi-\phi_n\| &\le \text{(I)+(II)} \\
&\le \frac 1{q^{n}(q-1)}\cdot\sum_{k=1}^n\frac 1{|(q;q)_{k-1}|} + \sum_{k=n+1}^\infty\frac 1{|(q;q)_{k}|} \\
&\le \frac 1{q^{n}(q-1)}\cdot \sum_{k=0}^{n-1}\frac 1{|(q;q)_{k}|} + \frac 1{q^{n+1}-1}\cdot\sum_{k=n}^{\infty} \frac 1{|(q;q)_{k}|} \\
&\le \frac 1{q^{n}(q-1)}\cdot \sum_{k=0}^{\infty}\frac 1{|(q;q)_{k}|}  \\
&\le 5\cdot q^{-(n+1)}.
\end{align*}
The last estimate can be obtained via the condition $q\ge 2$ by the considerations
\[\frac 1{q-1}\le 2q^{-1}\quad\text{and}\quad\sum_{k=0}^{\infty}\frac 1{|(q;q)_{k}|} = \sum_{k=0}^\infty\prod_{\ell=1}^k \frac1{q^\ell-1} \le 1+\underbrace{\frac 1{q-1}}_{\le 1}\cdot\sum_{m=0}^\infty\underbrace{\frac 1{(q^2-1)^m}}_{\le 3^{-m}}\le\frac 52.\]

\end{proof}

With this result we have an error estimate for a numerical application of \Cref{construction} to the Taylorlet transform. In order to see the effect of the approximation on the decay rate, we briefly revisit \Cref{detect} rsp. \hskip-1mm Corollary \ref{cor}. In both statements we observe the decay rate of the Taylorlet transform $\mathcal{T}f(a,s,t)$ of a feasible function $f(x)=c\cdot[x_1-q(x_2)]^j\cdot\1_{\R_\pm}(x_1-q(x_2))$ with a singularity function $q\in C^\infty(\R)$ for $a\to 0$. Additionally, both revolve around the cases 
\begin{equation}\tag{$A_k$}\label{A_k} 
s_i = q^{(i)}(t)\quad\text{for all }i\in\{0,\ldots,k\} 
\end{equation}
When using a restrictive analyzing Taylorlet as constructed in \Cref{construction}, Corollary \ref{cor} states that $\mathcal{T} f(a,s,t)$ decays superpolynomially fast for $a\to 0$ if and only if $(A_n)$ does not hold. Thus it provides a sharp distinction between $(A_n)$ and $\neg(A_n)$. For a numerical treatment of the Taylorlet transform we are bound to using Taylorlets with finitely vanishing moments and thus \Cref{detect} applies. Despite not providing a superpolynomial versus polynomial decay rate scenario, it still guarantees a significantly lower decay rate of the Taylorlet transform, if $(A_n)$ holds.

For an illustration of the functioning of the Taylorlets, we will show images of the Taylorlet transform of different twodimensional functions.

The Taylorlets we use for the images have 5 vanishing moments of second order in $x_1$-direction and are of the form suggested in \Cref{construction}.
\begin{align*}
\tau(x)&=g(x_1)\cdot h(x_2), \\
h(x_2)&=e^{-x_2^2}, \\
g(x_1)&=\phi_{10}\left(\sqrt{|x_1-\tfrac 18|}\right), \\
\phi_{10}(t)&=\prod_{m=0}^9 (\Id-2^{-(m+1)}D_{\frac 12})\phi_0(t), \\
\phi_0(t)&=\begin{cases}
1, & |t|\le \frac 14, \\
(128t^3-144t^2+48t-4), & |t|\in\left(\frac 14,\frac 12\right), \\
0, & |t|\ge \frac 12.
\end{cases}
\end{align*}

As stated in \Cref{detect}, the restrictiveness is important for the lower bound of the Taylorlet transform and hence for detecting the Taylor coefficients of the singularity function. In the upper definition of $\tau$ the shift in the step $g(x_1)=\phi_{10}\left(\sqrt{|x_1-\tfrac 18|}\right)$ provides the restrictiveness of the Taylorlet while preserving the vanishing moments, as the following lemma show. It must be mentioned, though, that the upper Taylorlet is only in $C^1(\R^2)$, but not $C^\infty(\R^2)$, because $\phi_0$ is not $C^2$ in the points $\pm\tfrac 14$ and $\pm\tfrac 12$.

\begin{lemma}\label{rest}
Let $q>1$ and let $\phi_0$ be of the form given in \Cref{construction} and let
\[\phi_{m+1}=\left(\Id-q^{-(m+1)}D_{\frac 1q}\right)\phi_m\]
for all $m\in\N$. If
\begin{enumerate}[label=\alph*)]
\item $t_0\in\left(-\e^{v_n},\e^{v_n}\right)$, $M\in\N$ and $g(t):=\phi_{Mv_n}\left(\sqrt[v_n]{|t-t_0|}\right)$ for all $t\in\R$,
\item $h\in\S(\R)$ with $\int_\R h(t)dt\ne 0$,
\end{enumerate}
then $\tau=g\otimes h$ has $M$ vanishing moments of order $n$ in $x_1$-direction and is restrictive.
\end{lemma}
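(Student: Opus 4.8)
The plan is to verify the two claimed properties separately, relying on \Cref{sqrt} (Proposition \ref{sqrt}) and \Cref{vanmom} (Lemma \ref{vanmom}) for the vanishing moments, and on a direct computation for restrictiveness. First, for the vanishing moments: observe that $g(t)=\phi_{Mv_n}(\sqrt[v_n]{|t-t_0|})$ is just the function $g_0(t):=\phi_{Mv_n}(\sqrt[v_n]{|t|})$ shifted by $t_0$. By Lemma \ref{vanmom}, $\phi_{Mv_n}\in\S(\R)$ is even and has $Mv_n$ vanishing moments, so by part a) of Proposition \ref{sqrt} the function $g_0$ lies in $\S(\R)$ and has $M$ vanishing moments of order $n$, i.e. $\int_\R g_0(\pm t^k)t^m\,dt=0$ for all $m\in\{0,\ldots,kM-1\}$ and $k\in\{1,\ldots,n\}$. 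The point is that a translation in the argument does not destroy these conditions: here I would simply remark that the vanishing-moment conditions $\int_\R g(\pm t^k)t^m\,dt=0$ for the shifted function $g(t)=g_0(t-t_0)$ are exactly the same conditions as for $g_0$ because the definition of ``$M$ vanishing moments of order $n$'' in the Definition is phrased in terms of $g$ evaluated at $\pm t^k$, and $g(\pm t^k)=g_0(\pm t^k - t_0)$; in fact one checks that since $\phi_{Mv_n}$ is even and supported (up to Schwartz decay) near the origin, the substitution behaves as in the proof of Proposition \ref{sqrt}. Thus $\tau=g\otimes h$ has $M$ vanishing moments of order $n$ in $x_1$-direction; the tensor structure and $h\in\S(\R)$ give $\tau\in\S(\R^d)$ as required for an analyzing Taylorlet.

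Second, and this is where the hypotheses $t_0\in(-\e^{v_n},\e^{v_n})$ and $t_0\ne 0$ (implicitly, so that the shift is nontrivial) and $M\ge 1$ really enter, I would verify the two restrictiveness conditions from the Definition. Condition (ii), $\int_\R h(t)\,dt\ne 0$, is hypothesis b) verbatim. For condition (i) I must show $g(0)\ne 0$ and $\int_0^\infty g(t)t^j\,dt\ne 0$ for $j\in\{0,\ldots,M-1\}$. For $g(0)=\phi_{Mv_n}(\sqrt[v_n]{|t_0|})$: since $|t_0|<\e^{v_n}$ we have $\sqrt[v_n]{|t_0|}<\e$, and $\phi_0\equiv 1$ on $[-\e,\e]$; but $\phi_{Mv_n}$ is a linear combination of dilates $D_{q^{-k}}\phi_0$, so on $[-\e,\e]$ it equals the constant $\sum$ of the coefficients, which by the computation in \Cref{construction} (the evaluation $\psi(0)=\sum_\ell 1/(q;q)_\ell$, truncated) is a specific nonzero number — here I would argue that $\phi_{Mv_n}|_{[-\e,\e]}$ is the constant $\prod_{m=1}^{Mv_n}(1-q^{-m})$, which is a product of strictly positive factors since $q>1$, hence nonzero. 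So $g(0)\ne 0$.

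The real work is the integral condition $\int_0^\infty g(t)t^j\,dt\ne 0$ for $0\le j\le M-1$, and I expect this to be the main obstacle. The key idea is the change of variables that turns this into an integral of $\phi_{Mv_n}$ against a power, exactly as in the proof of Proposition \ref{sqrt}: writing $u=\sqrt[v_n]{t-t_0}$ (for $t>t_0$) gives $\int_0^\infty g(t)t^j\,dt = \int_{\text{(lower limit)}}^\infty \phi_{Mv_n}(|u|)\,(u^{v_n}+t_0)^j\, v_n u^{v_n-1}\,du$ plus a contribution from $0<t<t_0$ if $t_0>0$. Expanding $(u^{v_n}+t_0)^j$ binomially produces a linear combination of moments $\int \phi_{Mv_n}(u)u^{v_n(i+1)-1}\,du$ with $0\le i\le j\le M-1$, i.e. exponents at most $v_n M - 1$, together with lower-order ``boundary'' terms coming from the region $|u|$ small where these moment integrals would otherwise vanish. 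Because $\phi_{Mv_n}$ has exactly $Mv_n$ vanishing moments on each half-line (Lemma \ref{vanmom}), the contributions over all of $\R_+$ of these powers vanish, so that $\int_0^\infty g(t)t^j\,dt$ reduces to an explicit integral over the small interval where $\phi_0\equiv 1$ still contributes nontrivially through the shift by $t_0$ — concretely a polynomial in $t_0$ of degree related to $j$, whose leading behaviour as $t_0\to 0$ is a nonzero monomial times a nonzero constant (again a positive product of $(1-q^{-m})$ factors times a combinatorial constant). Hence for $t_0\ne 0$ small enough (which is exactly the range $|t_0|<\e^{v_n}$ guaranteed by hypothesis a)) the integral is nonzero. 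I would organize this final step as: (1) perform the substitution, (2) split the integral at the threshold where $D_{q^{-k}}\phi_0$ transitions from its constant value, (3) use Lemma \ref{vanmom} to kill the full-line pieces, (4) evaluate the remaining finite piece explicitly and check its nonvanishing for small $t_0\ne 0$. The delicate bookkeeping is in step (2)–(4), making sure no cancellation occurs; the cleanest way is probably to note that $\int_0^\infty g(t)t^j\,dt$ is, as a function of $t_0$, real-analytic and not identically zero (its value at $t_0=0$ would be $\frac{v_n}{v_n}\int_{\R_+}\phi_{Mv_n}(u)u^{v_n(j+1)-1}\,du=0$ but some derivative in $t_0$ at $0$ is a nonzero multiple of $\int_{\R_+}\phi_{Mv_n}(u)u^{v_n(j+1)-1-v_n}\,\cdots$ — wait, these also vanish; so one must instead extract the genuinely nonzero term, which comes precisely from the endpoint contribution and involves $\phi_{Mv_n}(\e)=\,$const, not a moment), hence nonzero for all sufficiently small $t_0\ne0$. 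Finally, combining: $\tau=g\otimes h$ has $M$ vanishing moments of order $n$ in $x_1$-direction by the first part and satisfies (i) and (ii) by the second, so $\tau$ is a restrictive analyzing Taylorlet of order $n$, completing the proof.
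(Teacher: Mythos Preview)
Your overall plan matches the paper's: invoke Proposition~\ref{sqrt} and Lemma~\ref{vanmom} for the vanishing moments, read (ii) off from hypothesis~b), get $g(0)\ne 0$ from the fact that $\phi_{Mv_n}$ is a positive constant on $[-\e,\e]$, and establish (i) by combining the half-line moments with that flatness near $0$. The paper's computation of the restrictiveness integral is cleaner than your plan because it stays at the level of $\tilde\phi:=g_0$ rather than descending to $\phi_{Mv_n}$: it writes $\int_0^\infty g(t)t^m\,dt=\int_{-t_0}^\infty\tilde\phi(u)(u+t_0)^m\,du$, expands $(u+t_0)^m$ binomially, uses $\int_0^\infty\tilde\phi(u)u^k\,du=0$ for $k\le M-1$ (which it first derives from Lemma~\ref{vanmom} via the substitution $t=u^{v_n}$), and is left with $\sum_k\binom{m}{k}t_0^{m-k}\int_{-t_0}^0 c\,u^k\,du=\tfrac{c\,t_0^{m+1}}{m+1}\ne 0$. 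This is precisely your steps (1)--(4) executed cleanly; the ``boundary'' piece \emph{is} the entire nonzero answer, explicit for every admissible $t_0\ne 0$, so your real-analyticity detour and the ``small enough $t_0$'' hedging are unnecessary.

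One genuine gap: your argument that the shift preserves the higher-order vanishing moments (``the vanishing-moment conditions for the shifted function are exactly the same conditions as for $g_0$'') is not valid. The integrand $g(\pm t^k)=g_0(\pm t^k-t_0)$ is not $g_0(\pm t^k)$, and for $k\ge 2$ no change of variables reduces one condition to the other; the binomial trick that works for $k=1$ breaks down because $(u+t_0)^{(m+1)/k-1}$ is not a polynomial. The paper is equally terse at this point --- it simply cites Proposition~\ref{sqrt} and Lemma~\ref{vanmom} without addressing the shift --- so you are not doing worse than the paper, but the sentence you offer as justification is false and should be dropped rather than presented as a proof.
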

\begin{proof}
Let $\tilde\phi(t):=\phi_{Mv_n}\left(\sqrt[v_n]{|t|}\right)$. Due to \Cref{construction} there exists a $c>0$ such that
\begin{equation} \label{c}
\tilde\phi\big|_{\left[-\e^{v_n},\e^{v_n}\right]} \equiv c.
\end{equation}
Hence, with $g=\tilde\phi(\ \cdot-t_0)$ and $t_0\in\left(-\e^{v_n},\e^{v_n}\right)$ we obtain that
\[ g\big|_{\left[t_0-\e,\e-t_0\right]} \equiv c \]
and so $g\in\S(\R)$ still. Furthermore, Proposition \ref{sqrt} and Lemma \ref{vanmom} deliver that $g$ has $M$ vanishing moments of order $n$. For the restrictiveness of $\tau$, we have to show that
\begin{enumerate}[label=(\roman*)]
\item $g(0)\ne 0$ and $\int_0^\infty g(t)t^jdt\ne 0$ for all $j\in\{0,\ldots,r-1\}$ and
\item $\int_\R h(t)dt\ne 0$.
\end{enumerate}
Property (ii) is already given by condition b). Hence, (i) remains to show.

Lemma \ref{vanmom} in addition states that $\int_{\R_\pm}\phi_{Mv_n}(t)t^mdt=0$ for all $\ell\in\{0,\ldots,Mv_n-1\}$. We will now show that a similar property is true for $\tilde\phi$. The variable substitution $t= u^{v_n}$ delivers
\begin{align*}
\int_0^\infty\tilde\phi(t)t^mdt &= \int_0^\infty\phi_{Mv_n}\left(\sqrt[v_n]{|t|}\right)t^mdt \\
&= \int_0^\infty \phi_{Mv_n}(u) u^{mv_n}\cdot v_n\cdot u^{v_n-1} du \\
&=v_n\cdot\int_0^\infty \phi_{MV_n}(u)u^{(m+1)v_n-1} du=0
\end{align*}
for all $m\in\{0,\ldots,M-1\}$ due to Lemma \ref{vanmom}. With this result, we can show property (i). For $m\in\{0,\ldots,M-1\}$, we have
\begin{align*}
\int_0^\infty g(t)t^mdt&=\int_0^\infty \tilde\phi(t-t_0)t^mdt \\
&=\int_{-t_0}^\infty \tilde\phi(u)(u+t_0)^m dt \\
&= \sum_{k=0}^m \binom mk t_0^{m-k}\cdot\int_{-t_0}^\infty \tilde\phi(u)u^k dt \\
&= \sum_{k=0}^m \binom mk t_0^{m-k}\cdot \Bigg(\int_{-t_0}^0\underbrace{\tilde\phi(u)}_{\equiv c\text{ for }|u|\le t_0\quad (\ref{c})}u^kdu+\underbrace{\int_0^\infty \tilde\phi(u)u^k dt}_{=0} \Bigg) \\
&= c\cdot\sum_{k=0}^m \binom mk t_0^{m-k}\int_{-t_0}^0u^k du \\
&= c\cdot\sum_{k=0}^m \binom mk t_0^{m-k}\cdot\frac 1{k+1}\cdot[-(-t_0)^{k+1}] \\
&= -\frac {c\cdot t_0^{m+1}}{m+1}\cdot \sum_{k=0}^m \binom {m+1}{k+1} (-1)^{k+1} \\
&= -\frac {c\cdot t_0^{m+1}}{m+1}\cdot\Bigg( \underbrace{\sum_{k=0}^{m+1} \binom {m+1}{k} (-1)^{k}}_{=0} - \binom{m+1}0 (-1)^0\Bigg) \\
&= \frac{c\cdot t_0^{m+1}}{m+1}\ne 0.
\end{align*}

\end{proof}
The functions we will analyze with the Taylorlet transform are of the form
\[f(x)=\1_{\R_+}\left(x_1-q(x_2)\right),\]
where $q\in C^\infty(\R)$ is the singularity function of $f$. In order to efficiently compute the Taylorlet transform of such functions, we utilize the following lemma allowing a 1-dimensional and thus faster integration.
\begin{lemma}\label{1d}
Let $\tau$ be of the form given in Lemma \ref{rest}, let $f(x)=\1_{\R_+}\left(x_1-q(x_2)\right)$ with $q\in C^\infty(\R)$ and $n\in\N,$ $n\ge 2$. Furthermore let 
\[G(w):=av_n\cdot\int_{\sqrt[v_n]{|w-t_0|}}^\infty\phi_{Mv_n}(u)|u|^{v_n-1}du\]
for $w\in|R$. Furthermore, let $T:=G\otimes h$ and let $T_{ast}(x):=T\left(A_\frac 1a S_{-s}(x-t e_2)\right)$ for $x\in\R^2$.
Then 
\[\T_{\tau}f(a,s,t)=\int_\R T_{ast}\cvec{q(u) \\ u} du\]
for all $a>0$, $s\in\R^{n+1}$, $t\in\R$.
\end{lemma}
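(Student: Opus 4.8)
The plan is to collapse the two-dimensional pairing $\T_\tau f(a,s,t)=\langle f,\tau^{(n,\alpha)}_{a,s,t}\rangle$ to the asserted one-dimensional integral by carrying out the $x_1$-integration explicitly. Write $P(x_2):=\sum_{k=0}^n\frac{s_k}{k!}(x_2-t)^k$; by the tensor structure $\tau=g\otimes h$ we have $\tau^{(n,\alpha)}_{a,s,t}(x)=g\!\left(\frac{x_1-P(x_2)}{a}\right)h\!\left(\frac{x_2-t}{a^\alpha}\right)$. Since $f=\1_{\R_+}(x_1-q(x_2))$ is bounded and $\tau^{(n,\alpha)}_{a,s,t}\in\S(\R^2)$, the pairing is an absolutely convergent Lebesgue integral, so by Fubini
\[\T_\tau f(a,s,t)=\int_\R h\!\left(\frac{x_2-t}{a^\alpha}\right)\left(\int_{q(x_2)}^\infty g\!\left(\frac{x_1-P(x_2)}{a}\right)dx_1\right)dx_2 .\]
The affine substitution $v=\frac{x_1-P(x_2)}{a}$ turns the inner integral into $a\int_{w(x_2)}^\infty g(v)\,dv$, where $w(x_2):=\frac{q(x_2)-P(x_2)}{a}$ is exactly the first coordinate of $A_{1/a}S_{-s}\!\left(\cvec{q(x_2)\\ x_2}-t\,e_2\right)$.

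The core of the argument is then the pointwise identity $a\int_w^\infty g(v)\,dv=G(w)$ for all $w\in\R$. Recall $g(v)=\phi_{Mv_n}\!\left(\sqrt[v_n]{|v-t_0|}\right)$ with $\phi_{Mv_n}$ even and compactly supported. I would split the tail integral at $v=t_0$ and, on each of the two resulting pieces, apply the change of variables $u=\sqrt[v_n]{|v-t_0|}$ (with Jacobian $v_n u^{v_n-1}$, the branch determining a sign). This expresses $\int_w^\infty g$ through $\int_0^{R}\phi_{Mv_n}(u)u^{v_n-1}\,du$ and the full integral $\int_0^\infty\phi_{Mv_n}(u)u^{v_n-1}\,du$, where $R:=\sqrt[v_n]{|w-t_0|}$. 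Here Lemma \ref{vanmom} is decisive: since $M\ge 1$ one has $v_n-1\in\{0,\dots,Mv_n-1\}$, hence $\int_{\R_+}\phi_{Mv_n}(u)u^{v_n-1}\,du=0$. This annihilates the full-line term, makes the two one-sided contributions combine into the closed form $a v_n\int_R^\infty\phi_{Mv_n}(u)|u|^{v_n-1}\,du=G(w)$. The same vanishing shows in passing that $G$ is bounded and compactly supported, so the right-hand integral in the lemma converges absolutely.

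Reassembling, we substitute $a\int_{w(x_2)}^\infty g(v)\,dv=G(w(x_2))$ back into the iterated integral to obtain
\[\T_\tau f(a,s,t)=\int_\R G\big(w(x_2)\big)\,h\!\left(\frac{x_2-t}{a^\alpha}\right)dx_2 .\]
By the definitions $T=G\otimes h$ and $T_{ast}(x)=T\big(A_{1/a}S_{-s}(x-t\,e_2)\big)$, the integrand is exactly $T_{ast}\cvec{q(x_2)\\ x_2}$; renaming the variable of integration from $x_2$ to $u$ gives the claim.

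The step I expect to be the main obstacle is the bookkeeping in the substitution $u=\sqrt[v_n]{|v-t_0|}$: this map has a cusp at $v=t_0$, so a priori the tail integral $\int_w^\infty g$ behaves differently for $w<t_0$ and $w>t_0$, and one must keep the two branches and their signs straight. It is precisely the vanishing-moment identity $\int_{\R_+}\phi_{Mv_n}(u)u^{v_n-1}\,du=0$ that fuses the two cases into the single closed form defining $G$; the remaining ingredients — the reduction to iterated integrals, the affine substitution, and the Fubini justification — are routine.
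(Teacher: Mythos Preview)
Your approach matches the paper's: apply Fubini to the pairing and collapse the inner $x_1$-integral by a substitution that inverts $v\mapsto\sqrt[v_n]{|v-t_0|}$. The paper carries out the affine shift and the root substitution in a single (rather tersely written) change of variables and glosses over the branch at $v=t_0$, whereas you separate the two substitutions and explicitly invoke the half-line vanishing moment $\int_{\R_+}\phi_{Mv_n}(u)\,u^{v_n-1}\,du=0$ from Lemma~\ref{vanmom} to merge the two cases --- a point the paper's proof leaves implicit.
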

\begin{proof}
\dtf Wlog let $t=0$. By rewriting the Taylorlet transform we obtain
\begin{align*}
\mathcal{T}_\tau f(a,s,0) &= \int_{\R^2} \tau\cvec{\frac 1a\cdot \left[x_1-\sum_{k=0}^n \frac{s_kx_2^k}{k!}\right] \\ \frac{x_2}{a^\alpha}} \1_{\R_+}\left(x_1-q(x_2)\right)dx \\
&= \int_\R \int_{q(x_2)}^{\infty} g\left(\frac 1a\cdot \left[x_1-\sum_{k=0}^n \frac{s_kx_2^k}{k!}\right]\right)dx_1 h\left(\frac{x_2}{a^\alpha}\right)dx_2 \\
&= \int_\R \int_{q(x_2)}^{\infty} \phi_{Mv_n}\left(\sqrt[v_n]{\left|\frac 1a\cdot \left[x_1-\sum_{k=0}^n \frac{s_kx_2^k}{k!}\right]-t_0\right|}\right) dx_1 h\left(\frac{x_2}{a^\alpha}\right)dx_2.
\end{align*}
Performing the variable substitution
\[x=\cvec{\sgn(y_1)\cdot\left[a(y_1^{v_n}+t_0)+\sum_{k=0}^n\frac{s_kx_2^k}{k!}\right] \\ x_2},\quad \frac{dx_1}{dy_1}=av_n|y_1|^{v_n-1},\]
delivers
\begin{align*}
\mathcal{T}_\tau f(a,s,0) &= av_n\cdot\int_\R \int_{\sqrt[v_n]{\left|\frac 1a\cdot \left[q(y_2)-\sum_{k=0}^n \frac{s_ky_2^k}{k!}\right]-t_0\right|}}^\infty \phi_{Mv_n}(y_1)|y_1|^{v_n-1} dy_1 h\left(\frac{y_2}{a^\alpha}\right)dy_2 \\
&= \int_\R G\left(\frac 1a\cdot \left[q(y_2)-\sum_{k=0}^n \frac{s_ky_2^k}{k!}\right]\right) h\left(\frac{y_2}{a^\alpha}\right)dy_2 \\
&= \int_\R T_{as0}\cvec{q(y_2) \\ y_2} dy_2.
\end{align*}

\end{proof}

For the implementation of the Taylorlet transform in Matlab, we utilized Lemma \ref{1d} and employed the one-dimensional adaptive Gauss-Kronrod quadrature \texttt{quadgk} in Matlab for the evaluation of the integrals.

\begin{table}

\centering

\begin{tabular}{|c|c|c|}
\hline
$s_{0}$ & $s_{1}$ & $s_{2}$  \\
\hline
\begin{minipage}{.29\textwidth}
\includegraphics[width=\linewidth, trim = 0 -5 0 -5 ]{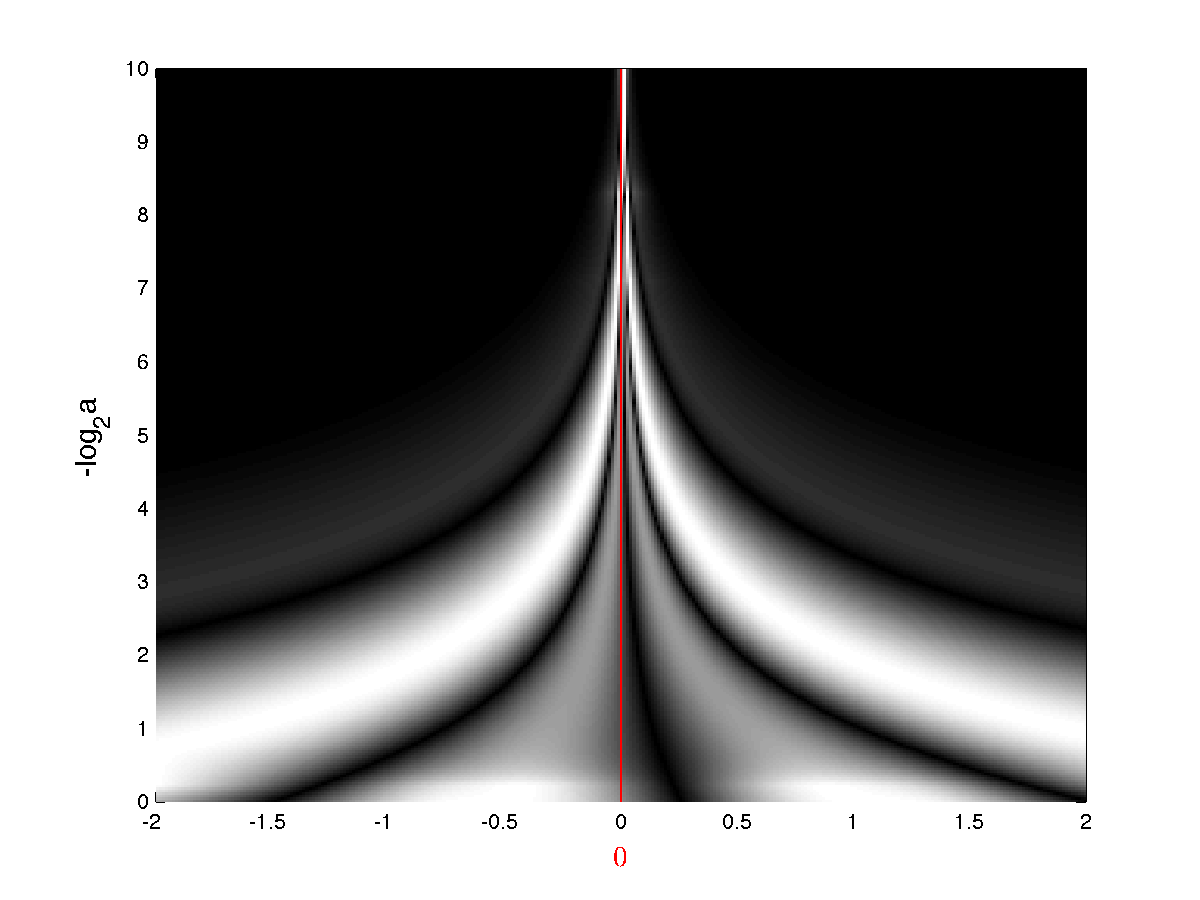}
\end{minipage} & 
\begin{minipage}{.29\textwidth}
\includegraphics[width=\linewidth, trim = 0 -5 0 -5 ]{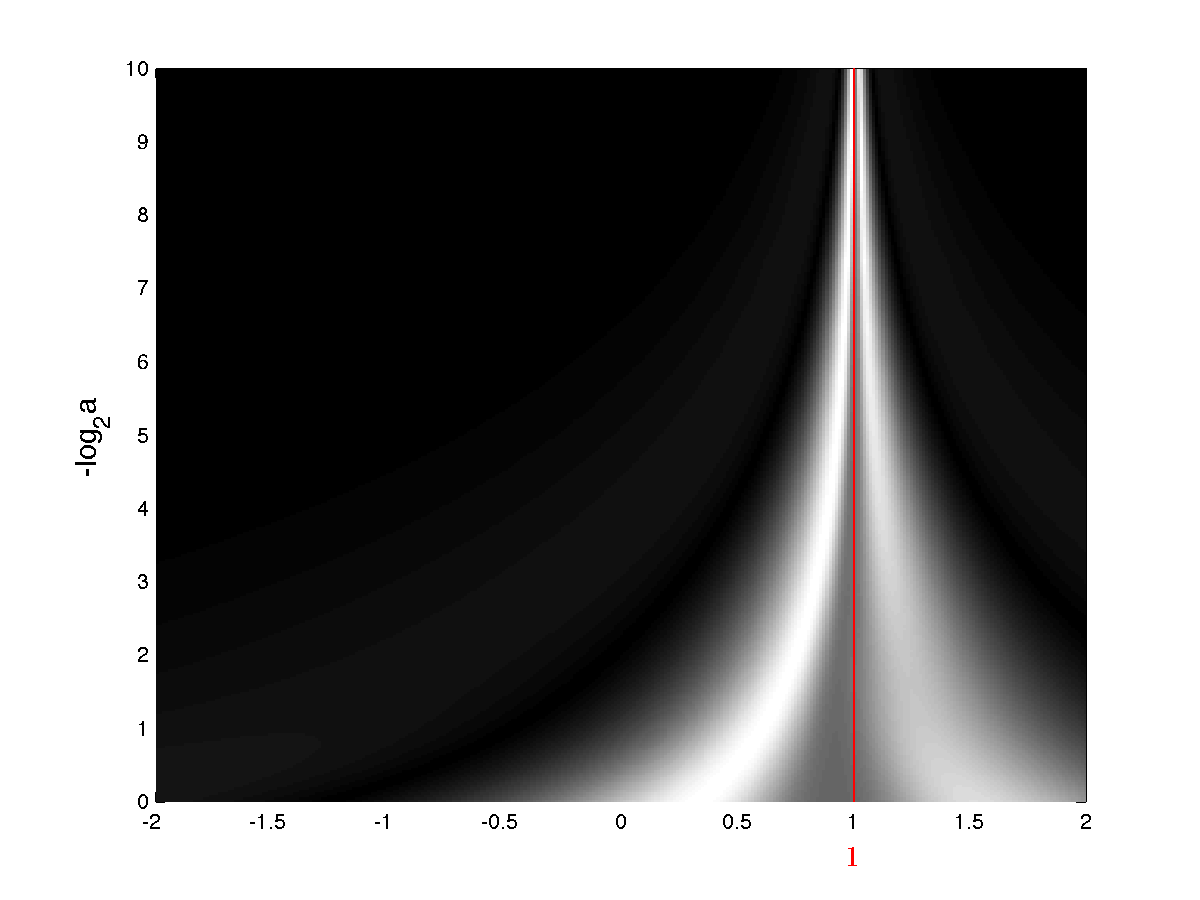}
\end{minipage} & 
\begin{minipage}{.29\textwidth}
\includegraphics[width=\linewidth, trim = 0 -5 0 -5 ]{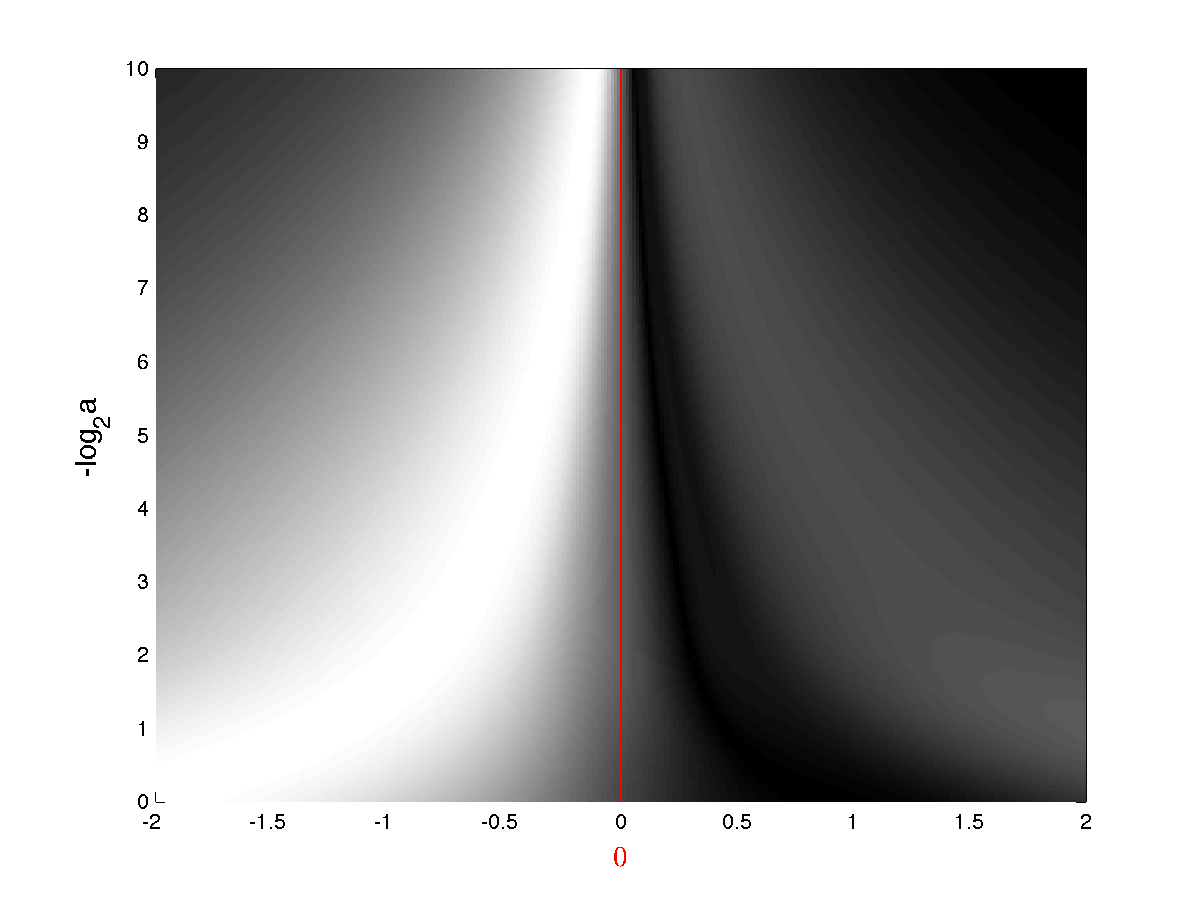}
\end{minipage} \\
\hline
\begin{minipage}{.29\textwidth}
\includegraphics[width=\linewidth, trim = 0 -5 0 -5 ]{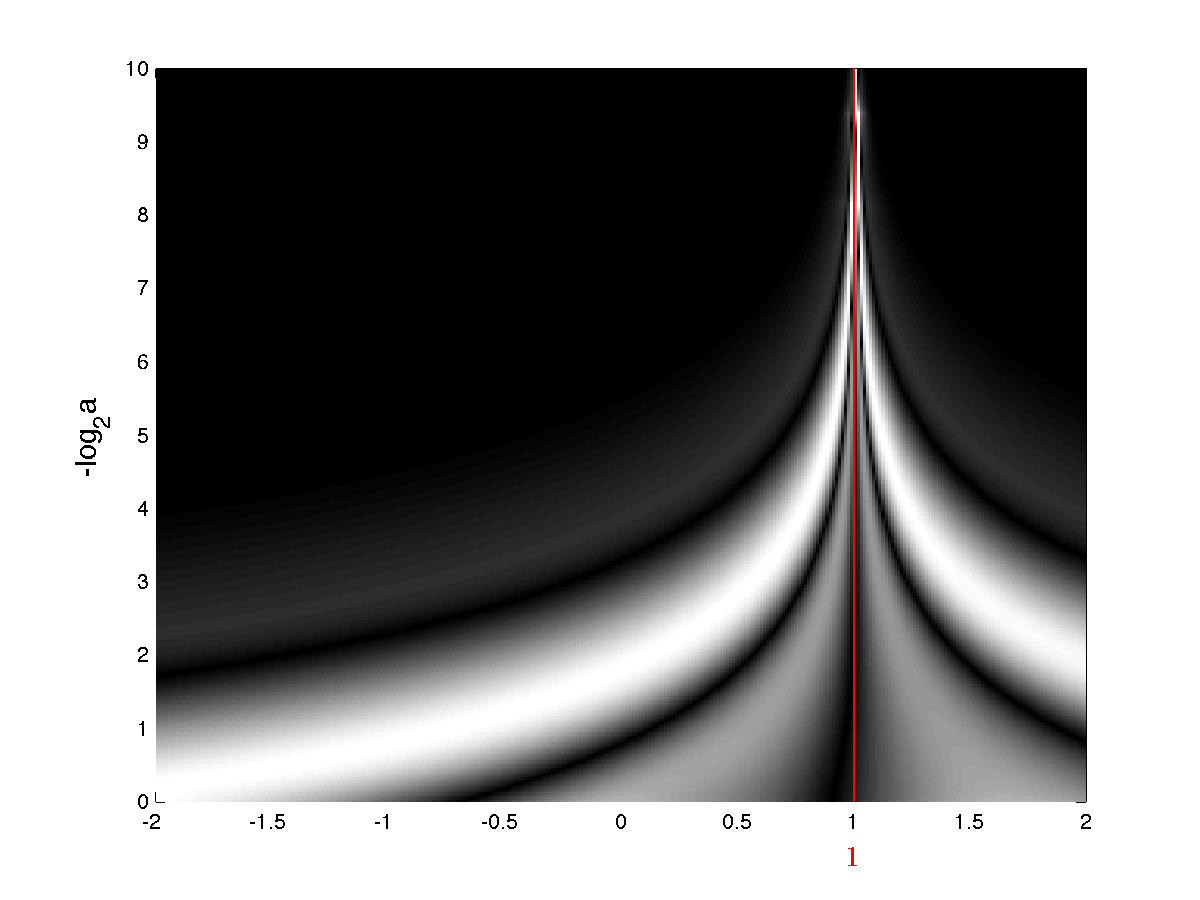}
\end{minipage} & 
\begin{minipage}{.29\textwidth}
\includegraphics[width=\linewidth, trim = 0 -5 0 -5 ]{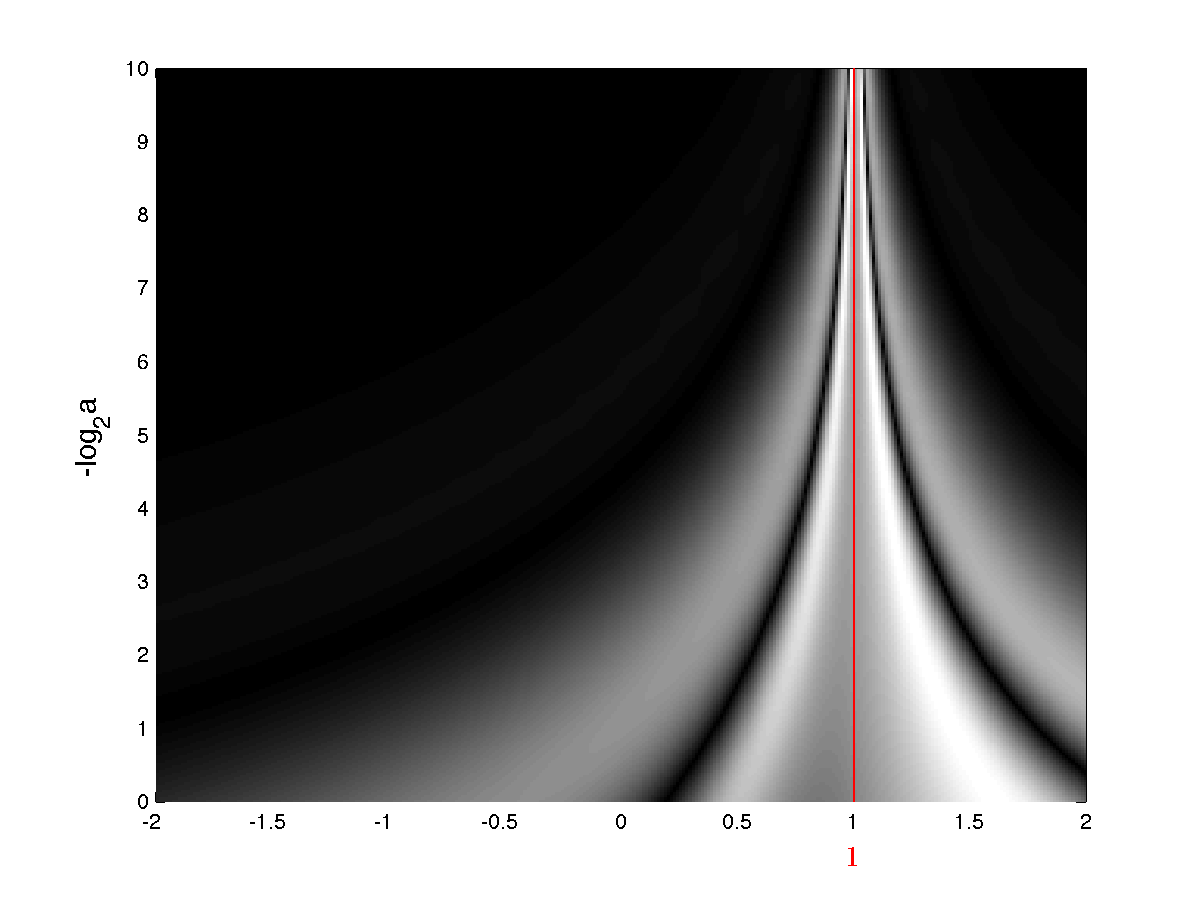}
\end{minipage} & 
\begin{minipage}{.29\textwidth}
\includegraphics[width=\linewidth, trim = 0 -5 0 -5 ]{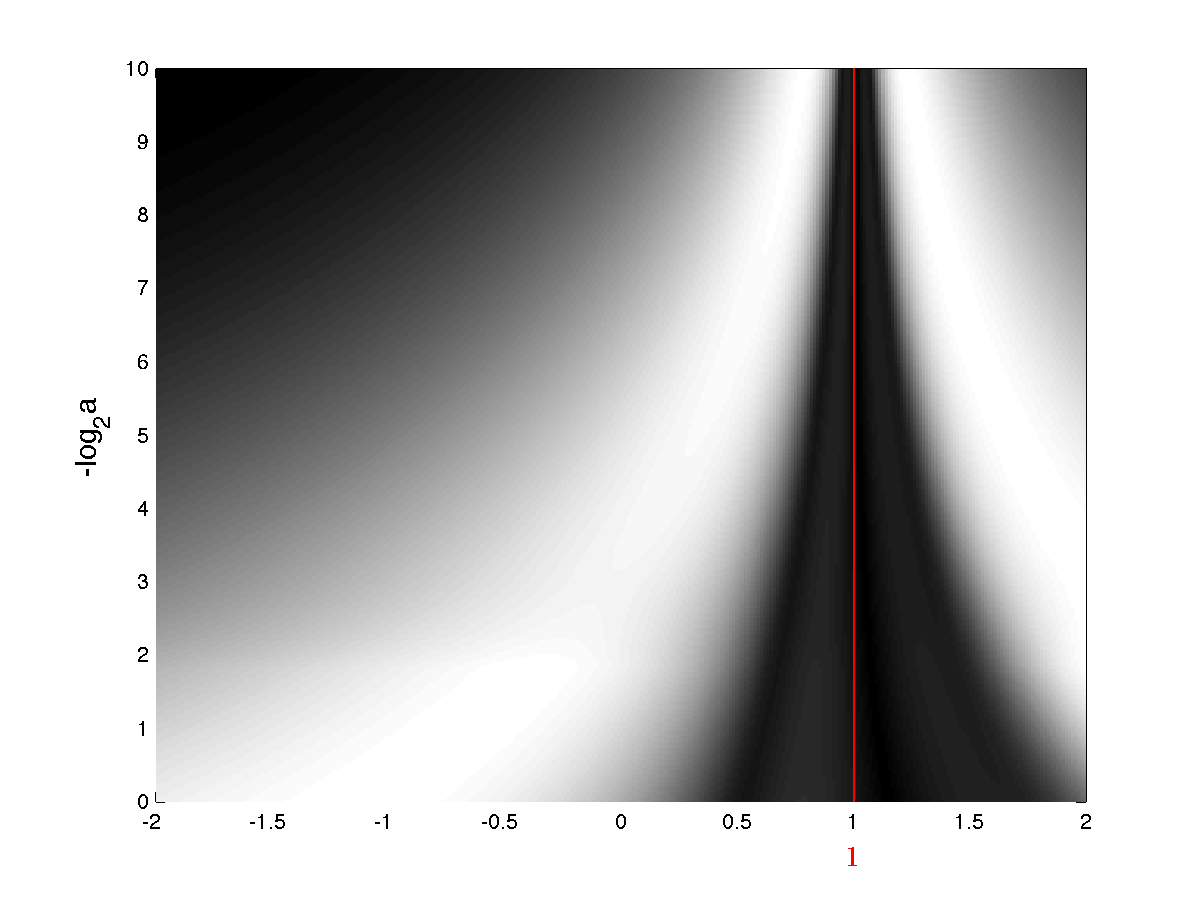}
\end{minipage} \\
\hline
\begin{minipage}{.29\textwidth}
\includegraphics[width=\linewidth, trim = 0 -5 0 -5 ]{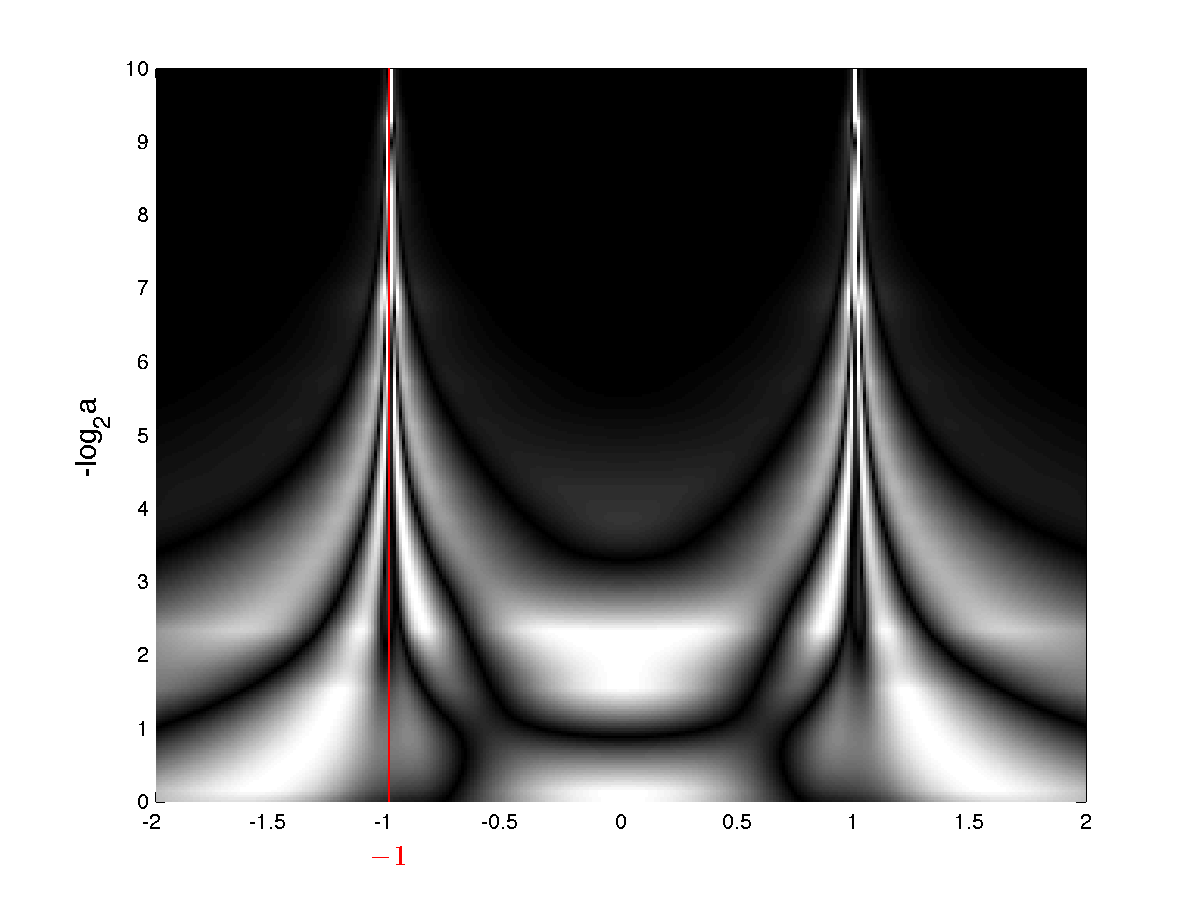}
\end{minipage} & 
\begin{minipage}{.29\textwidth}
\includegraphics[width=\linewidth, trim = 0 -5 0 -5 ]{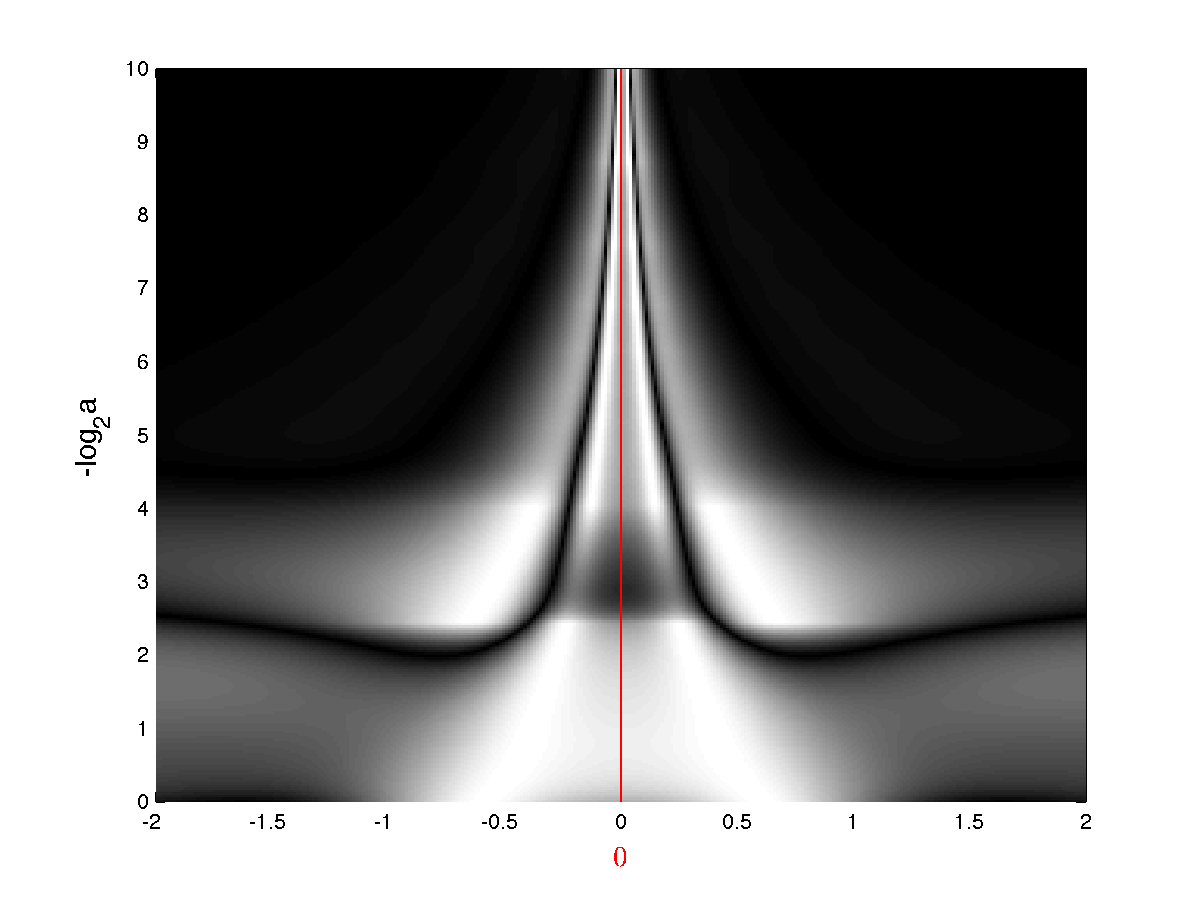}
\end{minipage} & 
\begin{minipage}{.29\textwidth}
\includegraphics[width=\linewidth, trim = 0 -5 0 -5 ]{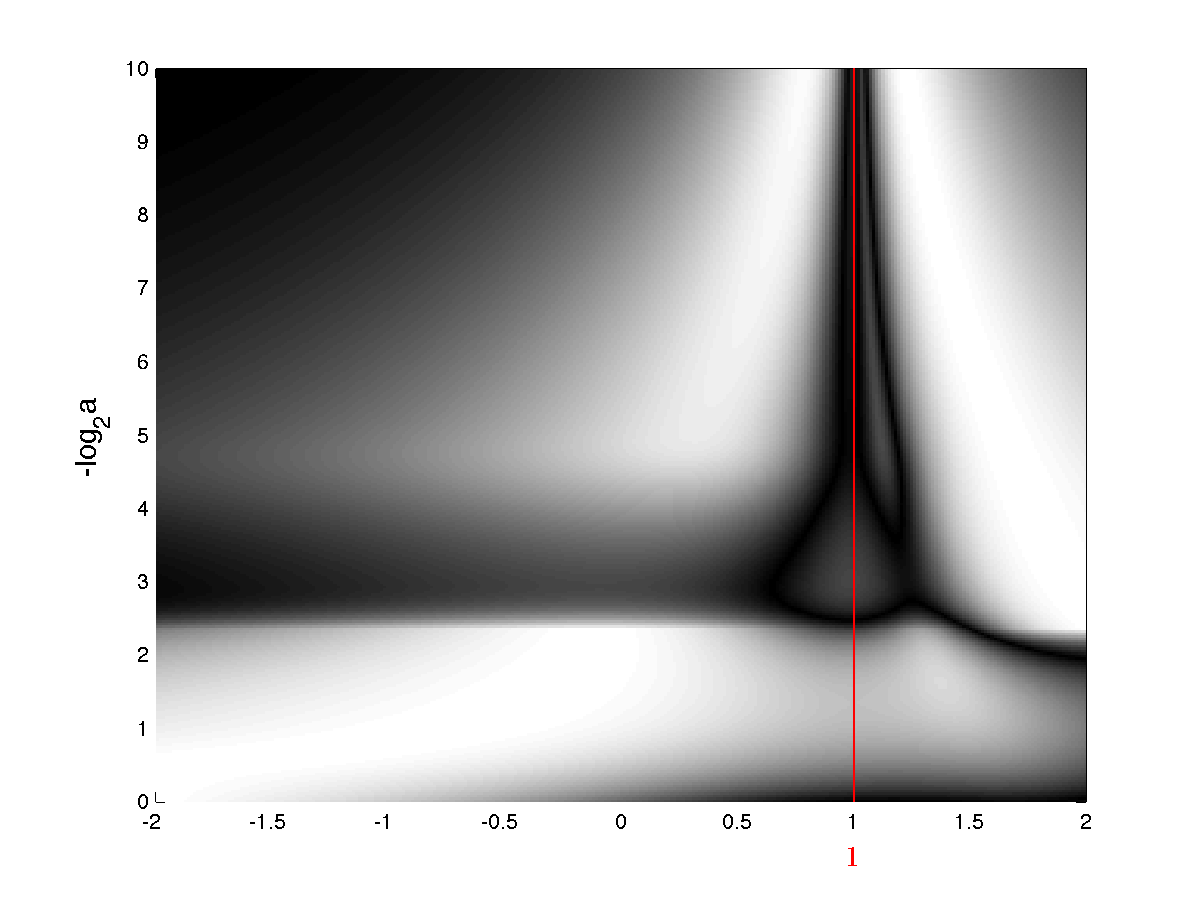}
\end{minipage}\\
\hline
\end{tabular}
\caption{Plots of the Taylorlet transform $\mathcal{T}f(a,s,0)$ for $f_1(x)=\1_{\R_+}(x_1- \sin x_2)$ (upper row), $f_2(x)=\1_{\R_+}\left(x_1-e^{x_2}\right)$ (middle row) and $f_3(x)=\1_{B_1}(x)$ (bottom row). The vertical axis shows the dilation parameter in a logarithmic scale $-\log_2a$. The horizontal axis shows the location $s_0$ (left), the slope $s_1$ (center) and the parabolic shear $s_2$ (right). The respective true value is indicated by the vertical red line. The values of $\alpha$ change with $s_{i}$: for $s_{0}$ we use $\alpha=1.01$, for $s_{1}$ we have $\alpha=0.51$ and during the search for $s_{2}$ we set $\alpha=0.34$. The Taylorlet transform was computed for points $(a,s_i)$ on a $300\times 300-$grid. We can observe the paths of the local maxima \dtt wrt the respective shearing variable as they converge to the correct related geometric value through the scales. Due to the vanishing moment conditions of higher order, the local maxima display a fast convergence to the correct value. The bottom left image shows that the Taylorlet transform of $f_3$ exhibits the two singularities $s_0=\pm1$, since this function is not of the form $\1_{\R_+}(x_1-q(x_2))$.}
\end{table}

Table 4 shows plots of the Taylorlet transform $\mathcal{T}f(a,s,0)$ of the three functions \(f_1(x)=\1_{\R_+}(x_1- \sin x_2)\), \(f_2(x)=\1_{\R_+}\left(x_1-e^{x_2}\right)\) and \(f_3(x)=\1_{B_1}(x)\). The vertical axis describes the dilation parameter in a binary logarithmic scale while the horizontal axis shows location, slope and parabolic shear rsp. 

These plots represent a search for successive Taylor coefficients of the respective singularity functions around the origin. This is possible by exploiting \Cref{detect} which states that the decay rate of the Taylorlet transform $\mathcal{T} f(a,s,t)$ for $a\to 0$ depends on $\alpha$ and on the highest $k\in\{0,\ldots,n\}$ for which the condition $(A_k)$ does not hold. In order to find $q(t)$, we hence compute the Taylorlet transform with $\alpha>1$ first. The choice of $\alpha$ and the restrictiveness of the Taylorlet, as guaranteed by Lemma \ref{rest}, ensure a decay rate of
\[\mathcal{T}f(a,s_{0},t)\sim 1\quad\text{for } a\to 0\]
for $s_{0}=q(t)$ due to \Cref{detect}. 

The following procedure is an adaption of the method of wavelet maximum modulus \cite{MaHw92}. We observe the paths of the local maxima \dtt wrt $s_0$ for decreasing $a$. As in the method of maximum modulus, the local extrema of the Taylorlet transform converge to $s_0=q(t)$ for decreasing $a$. After finding $q(t)$, we fix $s_0=q(t)$, choose an $\alpha\in\left(\frac 12,1\right)$ and repeat the procedure to find $q'(t)$ and to search for $q''(t)$ with an $\alpha\in\left(\frac 13,\frac12\right)$ in the final step.

For a better visibility of the local maxima, we normalized the absolute value of the Taylorlet transform such that the maximal value in each scale is 1. 

\section{Conclusion and Discussion}

In this article we utilized methods from q-calculus in the construction of a function $g\in\S_1^*(\R)$ which additionally is constant around the origin (\Cref{construction}). This allows for the creation of an analyzing Taylorlet of arbitrary order with infinitely many vanishing moments (Proposition \ref{sqrt}). In the explicit formula of the constructed function, the Euler function appears and exhibits an inherent connection between the presented method and the field of combinatorics. In section 5 we presented a numerical analysis of the evaluation of the function constructed in \Cref{construction} which preserves several important properties such as smoothness, decay rate and some, although not all of the vanishing moments. In numerical experiments we illustrated that our mathematical results translate into practice.

In the following consideration let $\tau$ be a restrictive analyzing Taylorlet of order $n$ with infinitely many vanishing moments in $x_1$-direction. As Corollary~\ref{cor} states, such a Taylorlet $\tau$ allows for a precise determination of the first $n+1$ Taylor coefficients of the singularity function $q$ by means of the decay rate of the Taylorlet transform. Interestingly, the first two Taylor coefficients are deeply linked to the concept of the wavefront set which includes localization and directionality of singularities. Furthermore, as was shown by Grohs, the shearlet transform allows for a resolution of the wavefront set if the shearlet is a Schwartz function and exhibits infinitely many vanishing moments in $x_1$-direction \cite[Thm 6.4]{gr11}. Hence, the Taylorlet $\tau$, similarly, is a good starting point for a generalization of the concept of wavefront set which addicionally includes local curvature and higher order geometric information of the distributed singularity. In a similar fashion as in the characterization of the classical wavefront set by the continuous shearlet transform, we can define a generalized wavefront of a tempered distribution $f\in\S'(\R^2)$ by using a Taylorlet $\tau$ as described above and $\alpha\in\left(\frac 1{n+1},\frac 1n\right)$ via
\begin{align*}
\mathcal{WF}_n(f)^c=\big\{&(t,s_0,\ldots,s_n)\in\R^{n+2}:\,\exists\text{ open neighborhood }U\text{ of }(t,s_0,\ldots,s_n):\\
&\mathcal{T}_\tau f(a,\cdot,\cdot)\text{ decays superpolynomially fast for }a\to 0\text{ in }U \text{ globally}\big\}.
\end{align*}
This concept would enable a more precise description and analysis of singularities than the definition of the wavefront set allows. To the knowledge of the authors this idea has not been pursued to this date and so a thorough investigation is still needed, but beyond the scope of this paper.

\section*{Acknowledgements}

The first author expresses his gratitude for the support by the DFG project FO 792/2-1 "Splines of complex order, fractional operators and applications in signal and image processing", awarded to Brigitte Forster. The work of the second author was supported by Portuguese funds through the CIDMA - Center for Research and Development in Mathematics and Applications, and the Portuguese Foundation for Science and Technology (``FCT--Funda\c{c}\~ao para a Ci\^encia e a Tecnologia''),  within project UID/MAT/ 0416/2013 and the sabbatical grant  SFRH/BSAB/135157/2017.

\bibliographystyle{alpha}
\bibliography{euler}

\end{document}